\newtheorem{theorem}{Theorem}[section]
\newtheorem{corollary}[theorem]{Corollary}
\newtheorem{lemma}[theorem]{Lemma}
\newtheorem{problem}[theorem]{Problem}
\newtheorem{proposition}[theorem]{Proposition}
\theoremstyle{definition}
\newtheorem{remark}[theorem]{Remark}
\def\J#1#2#3{ \left\{ #1,#2,#3 \right\} }
\def\RR{{\mathbb{R}}}
\def\11{\textbf{$1$}}
\def\CC{{\mathbb{C}}}
\begin{document}

\numberwithin{equation}{section}

\title[Approximation and convex decomposition]{Approximation and convex decomposition by extremals and the $\lambda$-function in JBW$^*$-triples}

\author[F.B. Jamjoom]{Fatmah B. Jamjoom}
\address{Department of Mathematics, College of Science, King Saud University, P.O.Box 2455-5, Riyadh-11451, Kingdom of Saudi Arabia.}
\email{fjamjoom@ksu.edu.sa}

\author[A.M. Peralta]{Antonio M. Peralta}
\address{Departamento de An{\'a}lisis Matem{\'a}tico, Universidad de Granada,\\
Facultad de Ciencias 18071, Granada, Spain}
\curraddr{Visiting Professor at Department of Mathematics, College of Science, King Saud University, P.O.Box 2455-5, Riyadh-11451, Kingdom of Saudi Arabia.}
\email{aperalta@ugr.es}

\author[A.A. Siddiqui]{Akhlaq A. Siddiqui}
\address{Department of Mathematics, College of Science, King Saud University, P.O.Box 2455-5, Riyadh-11451, Kingdom of Saudi Arabia.}
\email{asiddiqui@ksu.edu.sa}

\author[H.M. Tahlawi]{Haifa M. Tahlawi}
\address{Department of Mathematics, College of Science, King Saud University, P.O.Box 2455-5, Riyadh-11451, Kingdom of Saudi Arabia.}
\email{htahlawi@ksu.edu.sa}

\thanks{The authors extend their appreciation to the Deanship of Scientific Research at King Saud University for funding this work through research group no  RGP-361. Second author also partially supported by the Spanish Ministry of Science and Innovation, D.G.I. project no. MTM2011-23843.}

\begin{abstract} We establish new estimates to compute the $\lambda$-function of Aron and Lohman on the unit ball of a JB$^*$-triple. It is established that for every Brown-Pedersen quasi-invertible element $a$ in a JB$^*$-triple $E$ we have $$\hbox{dist} (a, \mathfrak{E} (E_1)) = \max \left\{ 1- m_q (a) , \|a\|-1\right\},$$ where $\mathfrak{E} (E_1)$ denotes the set of extreme points of the closed unit ball $E_1$ of $E$. It is proved that $\lambda (a) = \frac{1+m_q (a)}{2},$ for every Brown-Pedersen quasi-invertible element $a$ in $E_1$, where $m_q (a)$ is the square root of the quadratic conorm of $a$. For an element $a$ in $E_1$ which is not Brown-Pedersen quasi-invertible we can only estimate that $\lambda (a)\leq \frac12 (1-\alpha_q (a)).$ A complete description of the $\lambda$-function on the closed unit ball of every JBW$^*$-triple is also provided, and as a consequence, we prove that every JBW$^*$-triple satisfies the uniform $\lambda$-property.
\end{abstract}

\date{}
\maketitle

\section{Introduction}\label{sec:intro}

In \cite{AronLohm}, Aron and Lohman, defined a function on the closed unit ball, $X_1$, of an arbitrary Banach space $X$, which is determined by the geometric structure of the set $\mathfrak{E} (X_1)$ of extreme points of the closed unit ball of $X$. The mentioned function is called the \emph{$\lambda$-function} of the space $X$. The concrete definition reads as follows: Let us assume that $\mathfrak{E} (X_1)\neq \emptyset$, let $x$ and $y$ be elements in $X_1$, and let $e$ be an element in $\mathfrak{E} (X_1).$ For each $0<\lambda \leq 1,$ the ordered triplet $(e, y, \lambda)$ is said to be \emph{amenable} to $x$ when $x= \lambda e + (1-\lambda) y$. The $\lambda$-function is defined by $$\lambda (x) := \sup \mathcal{S}(x),$$ where $\displaystyle \mathcal{S}(x):=\{\lambda : (e, y, \lambda) \hbox{ is a triplet amenable to } x\}.$ The space $X$ satisfies the \emph{$\lambda$-property} if $\lambda (x) >0$, for every $x\in X_1$. The Banach space $X$ has \emph{the uniform $\lambda$-property} when $\inf\{\lambda(x): x \in X_1 \} > 0$. Aron, Lohman and Suárez explored the first properties of the $\lambda$-function and gave explicitly the form of this function for certain classical function and sequence spaces in \cite{AronLohm,AronLohmSu}.\smallskip

In \cite[Question 4.1]{AronLohm}, Aron and Lohman posed the following challenge: ``What spaces of operators have the $\lambda$-property and what does the $\lambda$-function look like for these spaces?''. This question motivated a whole series of papers, in which Brown and Pedersen determined the exact form of the $\lambda$-function for every von Neumann algebra and for every unital C$^*$-algebra (cf. \cite{Ped91}, \cite{BrowPed95} and \cite{BrowPed97}). In their study of the $\lambda$-function, Brown and Pedersen introduce the set $A_q^{-1}$ of \emph{quasi-invertible elements} in a unital C$^*$-algebra $A$, and study the geometric properties of $A_1$ in relation to the set $A_q^{-1}$. The following explicit formulae to compute the distance from an element in $A_1$ to the set of quasi-invertible elements or to $\mathfrak{E} (A_1)$ are established by Brown and Pedersen: $$\hbox{dist} (a,\mathfrak{E} (A_1)) =\left\{\begin{array}{lc}
                                                                 \max \left\{ 1- m_q (a) , \|a\|-1\right\}, & \hbox{ if } a\in A_q^{-1}; \\
                                                                 \ & \ \\
                                                                 \max \left\{ 1+ \alpha_q (a) , \|a\|-1\right\}, & \hbox{ if } a\notin A_q^{-1},
                                                               \end{array}
 \right.$$ where $\alpha_q (a) = \hbox{dist} (a,A_q^{-1})$ and $m_q (a)=\hbox{dist} (a,A\backslash A_q^{-1})$ (cf. \cite[Theorem 2.3]{BrowPed97}). The $\lambda$-function is given by $$\lambda (a) = \left\{\begin{array}{lc}
                     \frac{1+m_q (a)}{2}, & \hbox{ if } a\in A_1\cap A_q^{-1}; \\
                     \ & \ \\
                     \frac12 (1-\alpha_q (a)), & \hbox{ if } a\in A_1\backslash A_q^{-1},
                   \end{array}
 \right.$$ (cf. \cite[Theorem 3.7]{BrowPed97}). Furthermore, every von Neumann algebra (i.e. a C$^*$-algebra which is also a dual Banach space) satisfies the uniform $\lambda$-property, actually the expression $\lambda(a) =\frac{1+m_q (a)}{2}$ holds for every element $a$ in the closed unit ball of a von Neumann algebra (cf. \cite[Theorem 4.2]{Ped91}).\smallskip

There exists a class of complex Banach spaces defined by certain holomorphic properties of their open unit balls, we refer to the class of JB$^*$-triples. Harris shows in \cite{Harris74} that the open unit ball of every C$^*$-algebra $A$ is a \emph{bounded symmetric domain} and the same conclusion holds for the open unit ball of every closed linear subspace $U \subseteq A$ invariant under the Jordan triple product \begin{equation}\label{eq triple product C*} \{x,y,z\}: =\frac12 (xy^*z+zy^* x).
 \end{equation}In \cite{Ka83}, Kaup introduces the concept of a JB$^*$-triple and shows that every bounded symmetric domain in a complex Banach space is biholomorphically equivalent to the open unit ball of a JB$^*$-triple and in this way, the category of all bounded symmetric domains with base point is equivalent to the category of JB$^*$-triples. Actually every C$^*$-algebra is a JB$^*$-triple with respect to \eqref{eq triple product C*}, however, the class of JB$^*$-triples is strictly wider than the class of C$^*$-algebras (see next section for definitions and examples).\smallskip

For each complex Banach space $E$ in the class of JB$^*$-triples, the open unit ball of $E$ enjoys similar geometric properties to those exhibited by the closed unit ball of a C$^*$-algebra. Many geometric properties studied in the setting of C$^*$-algebras have been studied in the wider class of JB$^*$-triples. For example, in recent papers, the first, third and fourth authors of this note extend the notion of quasi-invertible elements from the setting of C$^*$-algebras to the wider class of JB$^*$-triples introducing the concept of \emph{Brown-Pedersen quasi-invertible} elements (see \cite{TahPhD}, \cite{TahSiddJam2013}, and \cite{TahSiddJam2014}). Once the class $E_{q}^{-1}$ of Brown-Pedersen quasi-invertible elements in a JB$^*$-triple $E$ has been introduced, the following question seems the natural problem to be studied:

\begin{problem}\label{q lambda in JB*-triples} What JB$^*$-triples have the $\lambda$-property and what does the $\lambda$-function look like in the case of a JB$^*$-triple?
\end{problem}

Only partial answers to the above problem are known. Accordingly to the terminology employed by Brown and Pedersen, for each element $x$ in a JB$^*$-triple $E$, the symbol $\alpha_q (x)$ will denote the distance from $x$ to the set $E_{q}^{-1}$ of Brown-Pedersen quasi-invertible elements in $E$, that is, $\alpha_q (x) = \hbox{dist} (x,E_q^{-1}).$ The know estimates for the $\lambda$-function in the setting of JB$^*$-triples are the following: for each (complete tripotent) $v\in \mathfrak{E} (E_1),$ and each element $x$ in the closed unit ball of the Peirce-2 subspace $E_2(v)$ which is not Brown-Pedersen quasi-invertible in $E$ we have: \begin{equation}\label{ineq lambda function triples JamTahSidd} \lambda (x) \leq \frac12 (1-\alpha_q (x));
\end{equation}
consequently, $\lambda (x) =0$ whenever $\alpha_q (x) = 1$ (cf. \cite[Theorem 3.7]{TahSiddJam2014}). \smallskip

In this paper we continue with the study of the $\lambda$ function in the general setting of JB$^*$-triples. In section 2 we introduce the basic facts and definitions needed in the paper, and we revisit the concept of Brown-Pedersen quasi-invertibility by finding new characterizations of this notion in terms of the triple spectrum and the orthogonal complement of an element.\smallskip

We begin section 3 proving that, for each element $x$ in a JB$^*$-triple $E$, the square-root of the quadratic conorm, $\gamma^{q} (x)$, introduced in \cite{BurKaMoPeRa}, measures the distance from $x$ to the set $E\backslash E_q^{-1}$ (see Theorem \ref{t mq distance}), where by convention $\gamma^{q} (x)=0$ for every $x\in E\backslash E_q^{-1}$. It is established that for every Brown-Pedersen quasi-invertible element $a$ in $E$ we have $$\hbox{dist} (a, \mathfrak{E} (E_1)) = \max \left\{ 1- m_q (a) , \|a\|-1\right\}$$ (see Proposition \ref{p distance to the extremals from a BP qinv element}). This formula is complemented with Theorem \ref{t lambda function 1 from a BPqinv} where we prove that $\lambda (a) = \frac{1+m_q (a)}{2},$ for every Brown-Pedersen quasi-invertible element $a$ in $E_1$.\smallskip

For elements in the closed unit ball of a JB$^*$-triple which are not Brown-Pedersen quasi-invertible, we improve the estimates in \eqref{ineq lambda function triples JamTahSidd} (see \cite{TahSiddJam2014}) by proving that for every JB$^*$-triple  $E$ with $\mathfrak{E} (E_1)\neq  \emptyset,$ the inequalities $$1+\|a\|\geq \hbox{dist} (a, \mathfrak{E} (E_1)) \geq \max \left\{ 1+ \alpha_q (a) ,  \|a\|-1  \right\},$$ hold for every $a$ in $E\backslash E_q^{-1}$ (Theorem \ref{t ineq distance to the extrem from a non BP qinv}). Consequently, the inequality $$\lambda (a)\leq \frac12 (1-\alpha_q (a)),$$ holds for every $a\in E_1\backslash E_q^{-1}$ without assuming that $a$ lies in the Peirce-2 subspace associated with a complete tripotent $v$ in $E$ (see Corollary \ref{c lambda function estimates from a non BPqinv}).\smallskip

A JBW$^*$-triple is a JB$^*$-triple which is also a dual Banach space. In the setting of JBW$^*$-triples play an analogue role to that played by von Neumann algebras in the class of C$^*$-algebras. In section 4 we prove that every JBW$^*$-triple satisfies the uniform $\lambda$-property (see Corollary \ref{c lambda property for JBW}), a result which extends \cite[Theorem 4.2]{Ped91} to the context of JBW$^*$-triples. This result will follow from Theorem \ref{t lambda function in JBW}, where it is established that for every JBW$^*$-triple $W$ the $\lambda$-function on $W_1$ is given by the expression: $$\lambda (a) = \left\{\begin{array}{lc}
                     \frac{1+m_q (a)}{2}, & \hbox{ if } a\in W_1\cap W_q^{-1} \\
                     \ & \ \\
                     \frac12 (1-\alpha_q (a))=\frac12, & \hbox{ if } a\in W_1\backslash W_q^{-1}.
                   \end{array}
 \right.$$

The paper finishes with a result establishing that, for every element $a$ in the closed unit ball of a JB$^*$-triple $E$ which is not Brown-Pedersen quasi-invertible, if $\mathfrak{E} (E_1)\neq  \emptyset,$ then the distance from $a$ to the latter set is given by the formula $$\hbox{dist} (a, \mathfrak{E} (E_1)) = 1+ \alpha_q (a),$$ (see Theorem \ref{t exact distance to the extrem from a non BP qinv}).

\section{von Neumann regularity and Brown-Pedersen invertibility}\label{sec: von Neumann reg and BP inv}

From a purely algebraic point of view, a \emph{complex Jordan triple system}
is a complex linear space ${E}$ equipped with a triple product
$$\{.,.,.\} : { E} \times { E} \times { E} \to { E}$$ $$(x,y,z)
\mapsto \{x,y,z\}$$ which is bilinear and symmetric in the outer
variables and conjugate linear in the middle one and satisfies the
\emph{Jordan identity}:  $$L(x,y) \J abc = \J
{L(x,y)a}bc - \J a{L(y,x)b}c + \J ab{L(x,y)c},$$ for all
$x,y,a,b,c\in { E},$ where $L(x,y): { E} \to { E}$ is the linear
mapping given by $L(x,y) z = \J xyz$.\smallskip

Given an element $a$ in a complex Jordan triple system $E$, the symbol $Q(a)$ will denote the
conjugate linear operator on $E$ given by $Q(a)(x):=\J axa$. It is known that the fundamental identity
\begin{equation}\label{basic equation}
Q(x) Q(y) Q(x) = Q(Q(x)y), \ \ \end{equation} holds for every $x,y$ in a complex Jordan triple system $E$ (cf. \cite[Lemma 1.2.4]{Chu2012}).\smallskip

The studies on von Neumann regular elements in Jordan triple systems began with the contributions of Loos \cite{Loos75} and Fern\'{a}ndez-L\'{o}pez, Garc\'{\i}a Rus, S\'{a}nchez Campos, and Siles
Molina  \cite{FerGarSanSi}. We recall that an element $a$ in a Jordan triple system $E$ is called \emph{von
Neumann regular} if $a\in Q(a) (E)$ and \emph{strongly von Neumann regular} when
$a\in Q(a)^2 (E)$.\smallskip

Enriching the geometrical structure of a complex Jordan triple system, we find the class complex Banach spaces called JB$^*$-triples, introduced by Kaup to classify bounded symmetric domains in arbitrary complex Banach spaces (cf. \cite{Ka83}). More concretely, a \emph{JB$^*$-triple} is a complex Jordan triple system $E$ which is a Banach space satisfying the additional geometric axioms:
\begin{enumerate}[$(a)$]
\item For each $x\in E$, the map $L(x,x)$ is an hermitian operator with
non-negative spectrum;

\item $\| \J xxx \| = \|x\|^{3}$ for all $x\in { E}$.
\end{enumerate} The basic bibliography on JB$^*$-triples can be found in
\cite{Up,Chu2012}.\smallskip

Examples of JB$^*$-triples include all C$^*$-algebras with the triple product given in \eqref{eq triple product C*}, all JB$^*$-algebras with triple
product $$\J abc := (a \circ b^*) \circ c + (c\circ b^*) \circ a - (a\circ c)
\circ b^*,$$ and the Banach space $L(H,K)$ of all bounded linear operators
between two complex Hilbert spaces $H,K$ with respect to \eqref{eq triple product C*}.\smallskip

A JBW$^*$-triple is a JB$^*$-triple which is also a dual Banach space
(with a unique isometric predual  \cite{BarTi}). The triple product of every JBW$^*$-triple
is separately weak$^*$ continuous (cf. \cite{BarTi}),%, it can be easily checked that Peirce projections associated with a tripotent $e$ in a JBW$^*$-triple are weak$^*$ continuous.
and the second dual, $E^{**}$, of a JB$^*$-triple $E$ is a JBW$^*$-triple (cf. \cite{Di86b}).\smallskip

In element $a$ in a JB$^*$-triple $E$ is von Neumann regular, if and only if, it is
strongly von Neumann regular if, and only if, there exists $b\in E$ such
that $Q(a) (b) =a,$ $Q(b) (a) =b$ and $[Q(a),Q(b)]:=Q(a)\,Q(b) - Q(b)\, Q(a)=0$ (cf. \cite[Theorem 1]{FerGarSanSi} and
\cite[Lemma 4.1]{Ka96}). Though for a von Neumann regular element $a$ in a JB$^*$-triple $E$, there exist many elements $c$ in $E$ such that $Q(a)(c) =a$,
there exists a unique element $b\in E$  satisfying $Q(a) (b) =a,$ $Q(b) (a) =b$ and $[Q(a),Q(b)]:=Q(a)\,Q(b) - Q(b)\, Q(a)=0$, this unique element $b$ is called the \emph{generalized inverse} of $a$ in $E$ and it is denoted by $a^{\dag}$.\smallskip

The simplest examples of von Neumann regular elements, probably, are tripotents.
We recall that an element $e$ in a JB$^*$-triple $E$
is called \emph{tripotent} when $\J eee =e$. Each tripotent $e$
in $E$ induces a decomposition of $E$ (called the \emph{Peirce decomposition}) in the form
$$E= E_{2} (e) \oplus E_{1} (e) \oplus E_0 (e),$$ where for
$i=0,1,2,$ $E_i (e)$ is the $\frac{i}{2}$ eigenspace of $L(e,e)$.
The Peirce rules affirm that $\J {E_{i}(e)}{E_{j} (e)}{E_{k} (e)}$ is
contained in $E_{i-j+k} (e)$ if $i-j+k \in \{ 0,1,2\}$ and is zero
otherwise. In addition,
$$\J {E_{2} (e)}{E_{0}(e)}{E} = \J {E_{0} (e)}{E_{2}(e)}{E} =0.$$
The projection $P_{k_{}}(e)$ of $E$ onto $E_{k} (e)$ is called the Peirce
$k$-projection. It is known that Peirce projections are contractive (cf. \cite{FriRu85}) and satisfy that $P_{2}(e) = Q(e)^2,$
$P_{1}(e) =2(L(e,e)-Q(e)^2),$ and $P_{0}(e) =Id_E - 2 L(e,e) + Q(e)^2.$ A tripotent $e$ in $E$
is said to be \emph{unitary} if $L(e,e)$ coincides with the identity map on $E,$ that is, $E_2 (e) = E$.
We shall say that $e$ is \emph{complete} when $E_0 (e) =\{0\}$.\smallskip

The Peirce space $E_2 (e)$ is a unital JB$^*$-algebra with unit $e$,
product $x\circ_e y := \J xey$ and involution $x^{*_e} := \J
exe$, respectively. Furthermore, the triple product in $E_2 (e)$ is given by $$\{ a,b,c\} = (a \circ_e b^{*_e}) \circ_e c + (c\circ_e b^{*_e}) \circ_e a - (a\circ_e c) \circ_e b^{*_e} \ (a,b,c\in E_2 (e)).$$

When a C$^*$-algebra $A$ is regarded as a JB$^*$-triple with the product given in \eqref{eq triple product C*}, tripotent elements in $A$ are precisely partial isometries of $A$. A JB$^*$-triple might not contain a single tripotent element (consider, for example, $C_0(0,1]$ the C$^*$-algebra of all complex-valued continuous functions on $[0,1]$ vanishing at $0$). However, since the complete tripotents of a JB$^*$-triple $E$ coincide with the complex and the real extreme points of its closed unit ball (cf. \cite[Lemma 4.1]{BraKaUp78} and \cite[Proposition 3.5]{KaUp77} or \cite[Theorem 3.2.3]{Chu2012}), every JBW$^*$-triple is full of complete tripotents.\smallskip

As shown by Kaup in \cite{Ka96}, the triple spectrum is one of the most appropriate tools to study and determine von Neumann regular elements.  The \emph{triple spectrum} of an element $a$ in a JB$^*$-triple $E$
is the set $$\hbox{Sp}(a) := \Big\{ t\in \CC : a\notin (L(a,a)-t^2
Id_{E}) (E) \Big\}.$$ The extended spectrum of $a$ is the set
$Sp^{'}(a) := Sp(a) \cup \{0\}$. As usually, the
smallest closed complex subtriple of $E$ containing $a$ will be
denoted by $E_a$. The set
$$\Sigma (a) := \Big\{ s\in \CC : (L(a,a)-s Id_{E})|_{E_a} \hbox{ is not
invertible in } L(E_a) \Big\}$$ stands for the usual spectrum of
the restricted operator $L(a,a)|_{E_a}$ in $L(E_a)$. Following standard notation, we assume
that $\Sigma (a) = \emptyset$ whenever $a=0$ (this is actually an
equivalence, compare \cite[Lemma 3.2]{Ka96}). The following properties were established in \cite{Ka96}.
\begin{enumerate}[{\rm $(\Sigma.i)$}] \item $\Sigma (a)$ is a compact
subset of $\RR$ with $\Sigma  (a)\geq 0$ and the origin cannot be
an isolated point of $\Sigma (a)$. The origin cannot be an isolated point of $\hbox{Sp}(a)$ and $Sp(a) =
-Sp(a)$.
\item $\hbox{Sp}(a) = \{ t \in \CC : t^2 \in \Sigma (a)\}$ and
$Sp(a)\neq \emptyset,$ whenever $a\neq 0$.
\item $S_a:=\hbox{Sp}(a)\cap [0,\infty)$ is a compact subset of $\RR,$ $\|a\|\in S_a \subseteq [0,\|a\|],$ and there exists a unique
triple isomorphism $\Psi : E_a \to C_0 (S_a\cup \{0\})$ such
that $\Psi (a) (s) = s$ for every $s\in S_a ,$ where $C_0(S_a\cup \{0\})$ denotes the space of all complex-valued, continuous functions on $S_a\cup \{0\}$ vanishing at zero. If $0\in S_a$ then it is not isolated in $S_a$.
\item The spectrum $\hbox{Sp}(a)$ does not change when computed
with respect to any closed complex subtriple $F\subseteq E $
containing $a$.
\item The element $a$ is von Neumann regular if and only if $0\notin Sp(a).$
\end{enumerate}

The basic properties of the triple spectrum lead us to the \emph{continuous triple functional calculus}. Given an element $a$ in a JB$^*$-triple $E$
and a function $f\in C_{0}(S_{a}\cup \{0\})$, $f_t (a)$ will denote the unique element in $E_a$ which is mapped to $f$ when $E_a$ is identified as JB$^*$-triple with $C_0 (S_a\cup \{0\})$. Consequently, for each natural $n$, the element
$a^{[\frac{1}{2n-1}]}$ coincides with $f_t (a)$, where
$f(\lambda):= \lambda^{\frac{1}{2n-1}}$. When $a$ is an
element in a JBW$^*$-triple $W$, the sequence $(a^{[\frac{1}{2n-1}]})$ converges in the
weak$^*$-topology of $W$ to a tripotent, denoted by $r(a)$,
and called the \emph{range tripotent} of $a$. The tripotent $r(a)$
is the smallest tripotent $e\in W$ satisfying that $a$ is
positive in the JBW$^*$-algebra $W_{2} (e)$ (see, for example, \cite[comments before Lemma 3.1]{EdRu88} or \cite[\S 2]{BuChuZa}).\smallskip

We shall habitually regard a Banach space $X$ as being contained in its bidual,
$X^{**}$, and we identify the weak$^*$-closure, in $X^{**}$, of a
closed subspace $Y$ of $X$ with $Y^{**}$. For an element $a$ in a JB$^*$-triple $E$, the range tripotent $r(a)$ is defined in $E^{**}$.
Having this in mind, the range tripotent of an element $a$ in a JB$^*$-triple is the element in $E_a^{**} \equiv \left(C_0(S_a\cup \{0\})\right)^{**}$ corresponding with the characteristic function of the set $S_a$.\smallskip

We recall that an element $a$ in a unital Jordan Banach algebra $J$ is called invertible whenever there exists $b\in J$ satisfying $a \circ b = 1$ and $a^2 \circ b = a.$ The element $b$ is unique and it will be denoted by $a^{-1}$. The set $J^{-1}= \hbox{inv}(J)$ of all invertible elements in $J$ is open in the norm topology and $a\in J^{-1}$ whenever $\|a-1\|<1$. It is well known that $a$ is invertible if, and only if, the mapping $x\mapsto U_a (x):= 2 (a\circ x)\circ a - a^2 \circ x$ is invertible, and in that case $U_a^{-1}= U_{a^{-1}}$ (see, for example \cite[page 107]{Chu2012}).\smallskip

The reduced minimum modulus was introduced in \cite{BurKaMoPeRa} to introduce an consider the quadratic conorm of an element in a JB$^*$-triple.
The \emph{reduced minimum modulus} of a non-zero bounded linear or conjugate linear
operator $T$ between two normed spaces $X$ and $Y$ is defined by
\begin{equation}\label{red min mod} \gamma (T) := \inf \{ \|T(x)\|
: \hbox{dist}(x,\ker(T)) \geq 1 \}.\end{equation} Following  \cite{Ka}, we set
$\gamma (0) = \infty$ (reader should be awarded that in
\cite{Apos} $\gamma (0) = 0$). When $X$ and $Y$ are Banach spaces, we have
$$\gamma (T) >0 \Leftrightarrow T(X) \hbox{ is norm closed},$$ (cf. \cite[Theorem IV.5.2]{Ka}). The quadratic-conorm of an element $a$ in a JB$^*$-triple
$E$ is defined as the reduced minimum modulus of $Q(a)$ and it will
be denoted by $\gamma^{q} (a),$ that is, $\gamma^{q} (a) = \gamma (Q(a)).$ The main results in \cite{BurKaMoPeRa} show, among many other things, that:
\begin{enumerate}[{\rm $(\Sigma.vi)$}] \item An element $a$ is von Neumann regular if, and only if, $Q(a)$ has norm-closed image if, and only if, the range tripotent $r(a)$ of $a$ lies in $E$ and $a$ is positive and invertible element of the JB$^*$-algebra $E_2 (r(a))$. Furthermore, when $a$ is von Neumann regular we have: $$Q(a) Q(a^{\dag})= P_2(r(a)) = Q(a^{\dag}) Q(a)$$ and $$L(a,a^{\dag})= L(a^{\dag},a)= L(r(a),r(a))$$ (cf. \cite[comments after Lemma 3.2]{Ka2001} or \cite[page 192]{BurKaMoPeRa}).
\item For each element $a$ in $E$, $\gamma^{q} (a) = \inf\{ \Sigma (a)\} = \inf\{ t^2 : t\in\hbox{Sp}(a)\}.$
\end{enumerate}

Let us recall that the \emph{Bergmann operator} associated with a couple of elements $x,y$ in a JB$^*$-triple
$E$ is the mapping $B(x,y): E\times E \to E$ defined by $B(x,y)= Id-2L(x,y)+Q(x)Q(y)$ (compare \cite{Loos} or
\cite[page 305]{Up}).\smallskip

Inspired by the definition of \emph{quasi-invertible} elements in a C$^*$-algebra developed by Brown and Pedersen in \cite{BrowPed95,BrowPed97}, Tahlawi, Siddiqui, and Jamjoom introduced and developed, in \cite{TahPhD,TahSiddJam2013} and \cite{TahSiddJam2014}, the notion of \emph{Brown-Pedersen quasi-invertible} elements in a JB$^*$-triple $E$. An element $a$ in $E$ is Brown-Pedersen quasi-invertible (BP-quasi-invertible for short) if there exists $b\in E$ such that $B(a,b)=0$. It was established in \cite{TahPhD,TahSiddJam2013} that an element $a$ in $E$ is BP-quasi-invertible if and only if one of the following equivalent statements holds: \begin{enumerate}[$(a)$] \item $a$ is von Neumann regular and its range tripotent $r(a)$ is an extreme point of the closed unit ball of $E$ (i.e. $r(a)$ is a complete tripotent of $E$);
\item There exists a complete tripotent $e\in E$ such that $a$ is positive and invertible in the JB$^*$-algebras $E_2 (e)$.
\end{enumerate} The set of all BP-quasi-invertible elements in $E$ is denoted by $E_q^{-1}$. Let us observe that, in principle, the notion of invertibility makes no sense in a general JB$^*$-triple. By \cite[Theorem 8]{TahSiddJam2013}, $E_q^{-1}$ is open in the norm topology (the reason being that, for each complete tripotent $e$, the set of invertible elements in the JB$^*$-algebra $E_2(e)$ is open and the Peirce projections are contractive).\smallskip

Let us observe that when a C$^*$-algebra $A$ is regarded as a JB$^*$-triple with product given by \eqref{eq triple product C*}, the BP-quasi-invertible elements in $A$, as JB$^*$-triple, are exactly the quasi-invertible elements of $A$ in the terminology of Brown and Pedersen in \cite{BrowPed95,BrowPed97}.\smallskip

We shall also need a characterization of BP-quasi-invertible elements in terms of the orthogonal complement. First, we recall that elements
$a,b$ in a JB$^*$-triple $E$ are said to be \emph{orthogonal} (denoted by $a \perp b$) when $L(a,b)=0.$ By \cite[Lemma 1]{BurFerGarMarPe} we know that
$a\perp b$ if, and only if, one of the following statements holds:
\begin{equation}
\label{ref orthogo}\begin{array}{ccc}
  \J aab =0; & a \perp r(b); & r(a) \perp r(b); \\
  & & \\
  E^{**}_2(r(a)) \perp E^{**}_2(r(b));\ \ \ & E_a \perp E_b;\ \ \  & \J bba=0. \\
 % & & \\
%  b \in E^{**}_0 (r(a)); & E_a \perp E_b & \J bba=0.
\end{array}
\end{equation} For each be a subset $M\subseteq E,$ we write
$M_{_E}^\perp$ for the \emph{(orthogonal) annihilator of $M$} defined by $$
M_{_E}^\perp:=\{ y \in E : y \perp x , \forall x \in M \}.$$ It is known that, for each tripotent $e$ in $E$, $\{e\}^{\perp} = E_{0} (e)$. Furthermore, the identity $\{a\}^{\perp} = \left(E^{**}\right)_0 (r(a)) \cap E$ holds for every $a\in E$ (cf. \cite[Lemma 3.2]{BurGarPe11}). We therefore have:

\begin{lemma}\label{l new characterization BPqinv} Let $a$ be an element in a JB$^*$-triple $E$. Then $a$ is BP-quasi-invertible if, and only if, $a$ is von Neumann regular and $\{a\}^{\perp} = \{0\}$.$\hfill\Box$
\end{lemma}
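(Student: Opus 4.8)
The plan is to deduce this characterization directly from the previously recorded description of $E_q^{-1}$, namely that $a\in E_q^{-1}$ if and only if $a$ is von Neumann regular and its range tripotent $r(a)$ is a complete tripotent of $E$ (characterization $(a)$ of Brown--Pedersen quasi-invertibility). Since von Neumann regularity appears both in the conclusion we want and in this characterization, the whole argument reduces to showing that, \emph{under the standing assumption that $a$ is von Neumann regular}, the tripotent $r(a)$ is complete if and only if $\{a\}^{\perp}=\{0\}$.

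First I would recall from $(\Sigma.vi)$ that when $a$ is von Neumann regular its range tripotent $r(a)$ actually lies in $E$ (and $a$ is positive and invertible in $E_2(r(a))$), so that the Peirce decomposition of $E$ relative to $r(a)$ is available inside $E$ itself and not merely in $E^{**}$. Next I would identify the annihilator of $a$ with that of $r(a)$: from the list of equivalent formulations of orthogonality in \eqref{ref orthogo}, the relation $y\perp a$ is equivalent to $y\perp r(a)$ for every $y\in E$, whence $\{a\}^{\perp}=\{r(a)\}^{\perp}$. Combining this with the fact that $\{e\}^{\perp}=E_0(e)$ for any tripotent $e\in E$, applied to $e=r(a)\in E$, yields the crucial identity $\{a\}^{\perp}=E_{0}(r(a))$.

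With this identity in hand the proof closes immediately: a tripotent is complete precisely when its Peirce-$0$ space is trivial, so $r(a)$ is complete if and only if $E_0(r(a))=\{0\}$, that is, if and only if $\{a\}^{\perp}=\{0\}$. Feeding this equivalence into characterization $(a)$ gives that $a\in E_q^{-1}$ exactly when $a$ is von Neumann regular and $\{a\}^{\perp}=\{0\}$, which is the assertion.

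I do not expect a genuine obstacle here; the argument is essentially a bookkeeping of the tools assembled above. The one point demanding care is that the identity $\{a\}^{\perp}=(E^{**})_0(r(a))\cap E$ is stated for an arbitrary $a$ with $r(a)$ computed in $E^{**}$; to replace $(E^{**})_0(r(a))\cap E$ by the honest Peirce-$0$ space $E_0(r(a))$ of $E$ one must invoke von Neumann regularity in order to guarantee $r(a)\in E$. This is precisely why the hypothesis of von Neumann regularity cannot be dropped, and it is harmless here since it is part of the statement in both directions.
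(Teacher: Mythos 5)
Your proposal is correct and follows essentially the same route the paper intends: the lemma is stated there as an immediate consequence of the preceding facts, namely the characterization $(a)$ of BP-quasi-invertibility, the orthogonality equivalences \eqref{ref orthogo}, and the identities $\{e\}^{\perp}=E_0(e)$ and $\{a\}^{\perp}=(E^{**})_0(r(a))\cap E$, which is exactly the bookkeeping you carry out. Your explicit attention to the point that von Neumann regularity guarantees $r(a)\in E$ (via $(\Sigma.vi)$), so that the Peirce-$0$ space can be taken inside $E$ itself, is precisely the detail that makes the assembled facts fit together.
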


We initiate the novelties with a series of technical lemmas. 

\begin{lemma}\label{l tech 1} Let $e$ be a complete tripotent in a JB$^*$-triple $E$ and let $z$ be an element in $E$. Suppose $P_2 (e) (z)$ is  invertible in the JB$^*$-algebra $E_2 (e)$. Then $z$ is BP-quasi-invertible.
\end{lemma}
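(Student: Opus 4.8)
The plan is to reduce everything to the characterization of Brown-Pedersen quasi-invertibility recorded in Lemma \ref{l new characterization BPqinv}: it suffices to prove that $z$ is von Neumann regular and that $\{z\}^{\perp}=\{0\}$. First I would exploit the completeness of $e$, which gives $E_0(e)=\{0\}$ and hence the Peirce splitting $E=E_2(e)\oplus E_1(e)$; write $z=a+b$ with $a:=P_2(e)(z)$ (invertible in the JB$^*$-algebra $E_2(e)$ by hypothesis) and $b:=P_1(e)(z)\in E_1(e)$.

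For the condition $\{z\}^{\perp}=\{0\}$, I would take $y\perp z$ and aim to force $y=0$. Orthogonality supplies several identities at once --- $L(z,y)=L(y,z)=0$, $Q(z)(y)=0$, $\{z,z,y\}=0$ and $\{y,y,z\}=0$ --- and the strategy is to decompose $y=y_2+y_1$ along the Peirce spaces of $e$ and to project each identity onto $E_2(e)$ and $E_1(e)$ using the Peirce arithmetic $\{E_i(e),E_j(e),E_k(e)\}\subseteq E_{i-j+k}(e)$ together with $\{E_2(e),E_0(e),E\}=0$. The point is that the invertibility of $a$ makes the quadratic operator $Q(a)=U_a$ a bijection of $E_2(e)$ and, correspondingly, renders the relevant Peirce components of $L(e,a)$ and $L(a,e)$ injective on $E_1(e)$; feeding the projected relations into these injective maps should first yield $y_1=0$ and then $y_2=0$.

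For von Neumann regularity I would show $0\notin \hbox{Sp}(z)$, equivalently $\gamma^{q}(z)=\inf\Sigma(z)>0$ (property $(\Sigma.vi)$), i.e. that $Q(z)$ has norm-closed range. Here again the invertibility of $a$ in $E_2(e)$ --- so that $Q(a)$ is bounded below on $E_2(e)$ --- combined with the completeness of $e$ should provide a uniform lower bound for $Q(z)$ modulo its kernel. Once both facts are in hand, Lemma \ref{l new characterization BPqinv} gives that $z$ is Brown-Pedersen quasi-invertible; equivalently, $r(z)$ is a complete tripotent lying in $E$, $z$ is positive and invertible in $E_2(r(z))$, and the Bergmann operator satisfies $B(z,z^{\dag})=Id-2L(r(z),r(z))+Q(r(z))^2=P_0(r(z))=0$.

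The main obstacle I anticipate is the orthogonal-complement step: the projected relations are coupled and quadratic in $y_1,y_2$, so the delicate part is to isolate a single component in which the invertibility of $a$ produces genuine injectivity (it must be the quadratic operator $Q(a)=U_a$, since plain Jordan multiplication by an invertible element of $E_2(e)$ need not be injective) and then to bootstrap from $y_1=0$ to $y_2=0$. Controlling the reduced minimum modulus of $Q(z)$ from below in the presence of the off-diagonal term $b$ is the analogous difficulty for the von Neumann regularity step.
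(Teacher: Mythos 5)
You have the paper's skeleton exactly right: split $z=a+b$ with $a=P_2(e)(z)$, $b=P_1(e)(z)$ (using $E_0(e)=\{0\}$), and reduce to Lemma \ref{l new characterization BPqinv}. But neither of the two substantive steps is carried out, and the mechanisms you propose for them do not close; you flag both difficulties yourself without resolving them, and they are real. For von Neumann regularity, your analytic route (a lower bound for $Q(z)$ modulo its kernel, hence closed range) has no visible engine: with respect to $E=E_2(e)\oplus E_1(e)$, the operator $Q(z)=Q(a)+2Q(a,b)+Q(b)$ is a matrix whose $E_1(e)$-corner is $Q(b)$ and whose off-diagonal entries are $2Q(a,b)$; invertibility of $Q(a)$ on $E_2(e)$ gives no control whatsoever over the $b$-dependent entries, which is precisely where an estimate would be needed. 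The paper avoids analysis entirely. It first proves a preliminary claim: $r(a)$ is a unitary tripotent of the JB$^*$-algebra $E_2(e)$ (since $P_2(r(a))|_{E_2(e)}=Q(a)Q(a^{\dag})|_{E_2(e)}=U_aU_{a^{-1}}=Id$ by $(\Sigma.vi)$), hence a complete tripotent of $E$ with $E_j(r(a))=E_j(e)$ for all $j$. With that in hand, regularity is a two-line Peirce computation exhibiting an explicit preimage:
$$Q(z)(a^{\dag}) = Q(a)(a^{\dag}) + 2\{a,a^{\dag},b\} + Q(b)(a^{\dag}) = a + 2L(r(a),r(a))(b) + 0 = a+b = z,$$
using $L(a,a^{\dag})=L(r(a),r(a))$ and $\{E_1(e),E_2(e),E_1(e)\}\subseteq E_0(e)=\{0\}$. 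No reduced minimum modulus is ever estimated.

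The orthogonality step has the same problem. Projecting $\{z,z,y\}=0$, $Q(z)y=0$, etc.\ onto the Peirce spaces leaves a coupled system (for instance the $E_2(e)$-component of $\{z,z,y\}=0$ is $L(a,a)y_2+\{a,b,y_1\}+\{b,b,y_2\}=0$), and injectivity of $Q(a)$ on $E_2(e)$ gives no way to eliminate the $b$-terms, which carry no smallness; your proposed bootstrap from $y_1=0$ to $y_2=0$ never gets started. The missing idea is a well-chosen test vector rather than a well-chosen identity: since the triple product is symmetric in the outer variables, $y\perp z$ (i.e.\ $L(y,z)=0$) gives $\{x,z,y\}=0$ for \emph{every} $x\in E$, and one takes $x=a^{\dag}$. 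Then Peirce arithmetic kills the cross term $\{a^{\dag},b,P_2(e)(y)\}\in\{E_2,E_1,E_2\}=\{0\}$, and $L(a^{\dag},a)=L(r(a),r(a))=L(e,e)$ (again via the preliminary claim), so the identity collapses to
$$0=\{a^{\dag},z,y\}=P_2(e)(y)+\tfrac12 P_1(e)(y)+\{a^{\dag},b,P_1(e)(y)\},$$
where the last term lies in $E_2(e)$; comparing $E_1(e)$-components gives $P_1(e)(y)=0$, and then the $E_2(e)$-component gives $P_2(e)(y)=0$, hence $y=0$. The argument is linear and needs no injectivity of $Q(a)$ at all. Without these two devices --- the unitarity/completeness of $r(a)$ feeding an explicit preimage, and the contraction of the orthogonality relation against $a^{\dag}$ --- your outline cannot be completed as stated.
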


\begin{proof} By hypothesis $z_2 = P_2 (e)(z_2)$ is invertible in the JB$^*$-algebra $E_2(e)$ with inverse $z_2^{-1}$, and since $e$ is complete, $z = z_2 + z_1$ where $z_1=P_1 (e) (z).$ Let us observe that $z_2$ is von Neumann regular in $E$ and $z_2^{\dag} = z_2^{-1}.$\smallskip

We claim that the invertibility of $z_2$ in $E_2 (e)$ also implies that $r(z_2)\in E_2(z_2)$ is a unitary tripotent in the JB$^*$-triple $E_2 (e).$ Indeed, since for each $x\in E_2 (e),$ $$x= P_2 (r(z_2)) (x)= Q(z_2) Q(z_2^{-1}) (x) = U_{z_2} U_{z_2^{-1}}(x),$$ we deduce that $P_2 (r(z_2))|_{E_2(e)}= Id_{E_{2} (e)},$ proving the claim.\smallskip

Clearly, $E_2 (e)= E_2 (r(z_2))$. Given $x\in E,$ the condition $$\{r(z_2), x, r(z_2)\}=0$$ implies $0= Q(r(z_2))^2 (x) = P_2 (r(z_2))(x) = P_2 (e) (x)$, and hence $x=P_1 (e) (x)$ lies in $E_1(e)$. Thus,  $E_1 (r(z_2)) \oplus E_0 (r(z_2)) \subseteq E_1 (e).$ Taking $x\in E_0 (r(z_2)),$ having in mind that $e\in E_2 (r(z_2))$, it follows from Peirce arithmetic that $\{e,e,x\}=0,$ which shows that $E_0 (r(z_2)) \subseteq E_0 (e)= \{0\}.$ Therefore, $r(z_2)$ is a complete tripotent in $E$ and $E_j (r(z_2))= E_j (e),$ for every $j=0,1,2.$\smallskip

Now, by Peirce arithmetic we have: $$Q(z) (z_2^{\dag}) = Q(z_2) (z_2^{\dag}) + 2 Q(z_2,z_1)  (z_2^{\dag}) + Q(z_1) (z_2^{\dag})= z_2 + 2 L(z_2,z_2^{\dag})  (z_1) +0$$
$$= z_2 + 2 L(r(z_2),r(z_2)) (z_1)= z_2 + 2 L(e,e) (z_1)= z_2 + z_1 =z,$$ and
$$Q(z_2^{\dag}) (z) = Q(z_2^{\dag}) (z_2) + Q(z_2^{\dag}) (z_1) = z_2^{\dag}.$$ This shows that $z$ is von Neumann regular. Take $a\in \{z\}^{\perp}.$ Since $$0=\{z_2^{-1}, z, a\}= \{z_2^{-1}, z_2, a\} + \{z_2^{-1}, z_1, a\} $$ $$= P_2 (e) (a) + \frac12 P_1 (e) (a) +  \{z_2^{-1}, z_1, P_2 (e) a\} + \{z_2^{-1}, z_1, P_1 (e) (a)\} $$ $$=\hbox{(by Peirce arithmetic)} = P_2 (e) (a) + \frac12 P_1 (e) (a) + \{z_2^{-1}, z_1, P_1 (e) (a)\},$$ which shows that $P_1 (e) (a)=0$, $P_2 (e) (a)=0$, and hence $a=0$. Lemma \ref{l new characterization BPqinv} concludes the proof.
\end{proof}

\begin{remark}\label{r range tripotent of an invertible element in Peirce 2 is unitary}{\rm We would like to isolate the following fact, which has been established in the proof of Lemma \ref{l tech 1} above: For each invertible element $b$ in a unital JB$^*$-algebra, $J$, its range tripotent $r(b)$ is a unitary element belonging to $J$.
}\end{remark}

\begin{corollary}\label{c distance to an extreme} Let $e$ be a complete tripotent in a JB$^*$-triple $E$. Suppose $a$ is an element in $E$ satisfying $\|a-e\|<1$, then $a$ is BP-quasi-invertible.
\end{corollary}

\begin{proof} Having in mind that $P_2 (e)$ is a contractive projection we get $$\|P_2 (e) (a)-e\| = \|P_2 (e) (a-e)\| \leq \|a-e\|<1.$$ Since $E_2 (e)$ is a unital  JB$^*$-algebra with unit $e$, it follows that $P_2 (e) (a)$ is an invertible element in $E_2 (e)$. The conclusion of the corollary follows from Lemma \ref{l tech 1}.\end{proof}

Let $u,v$ be tripotents in a JB$^*$-triple $E$. We recall \cite[\S 5]{Loos} that $u\leq v$ if $v-u$ is a tripotent with $u\perp v-u$. It is known that $u\leq v$ if and only if $P_2 (u) (v) =u$, or equivalently, $L(u,u) (v) = u$ (cf. \cite[Lemma 1.6 and subsequent remarks]{FriRu85}). In particular, $u\leq v$ if and only if $u$ is a projection in the JB$^*$-algebra $E_2 (v).$  Let us observe that the condition $u\geq v$ implies $L(v,v) (u) = u$. However the condition $L(v,v) (u) =u$ need not imply, in general, the inequality $v\geq u$ (cf. Remark \ref{r unitaries}).\smallskip

The following technical lemma will be repeatedly used later.

\begin{lemma}\label{l unitary in Peirce-2 of a complete tripotent is complete} Let $e$ be a complete tripotent in a JB$^*$-triple $E$. Suppose $u$ is a tripotent in $E_2(e)$ satisfying that $L(u,u) (e) =e$. Then $u$ is a complete tripotent of $E.$
\end{lemma}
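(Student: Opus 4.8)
The plan is to reduce the completeness of $u$ in $E$ to the \emph{invertibility} of $u$ in the unital JB$^*$-algebra $J:=E_2(e)$ (with unit $e$, product $\circ_e$ and involution $*_e$), and then to invoke Lemma \ref{l tech 1}. The first step is to reinterpret the geometric hypothesis $L(u,u)(e)=e$ as a purely Jordan-algebraic identity inside $J$. Using the explicit expression of the triple product of $J$ in terms of $\circ_e$ and $*_e$ recalled in Section \ref{sec: von Neumann reg and BP inv}, I would compute
$$L(u,u)(e)=\{u,u,e\}=(u\circ_e u^{*_e})\circ_e e+(e\circ_e u^{*_e})\circ_e u-(u\circ_e e)\circ_e u^{*_e}=u\circ_e u^{*_e},$$
so that the hypothesis is equivalent to the single relation $u\circ_e u^{*_e}=e$.

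Next I would extract a complementary identity from the fact that $u$ is a tripotent. Expanding $\{u,u,u\}=u$ with the same formula and substituting $u\circ_e u^{*_e}=e$ gives
$$u=\{u,u,u\}=2\,(u\circ_e u^{*_e})\circ_e u-(u\circ_e u)\circ_e u^{*_e}=2u-u^{2}\circ_e u^{*_e},$$
where $u^{2}:=u\circ_e u$; hence $u^{2}\circ_e u^{*_e}=u$. Together, $u\circ_e u^{*_e}=e$ and $u^{2}\circ_e u^{*_e}=u$ are exactly the two defining relations for invertibility in a unital Jordan Banach algebra, so $u$ is invertible in $J=E_2(e)$ with $u^{-1}=u^{*_e}$.

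With invertibility in hand the rest is immediate. Since $u\in E_2(e)$ we have $P_2(e)(u)=u$, which is now invertible in $E_2(e)$, so Lemma \ref{l tech 1} shows that $u$ is Brown-Pedersen quasi-invertible in $E$. As $u$ is a tripotent it is von Neumann regular with range tripotent $r(u)=u$, and therefore the BP-quasi-invertibility of $u$ means precisely that $r(u)=u$ is a complete tripotent of $E$; equivalently, by Lemma \ref{l new characterization BPqinv}, that $\{u\}^{\perp}=E_0(u)=\{0\}$. This yields the desired conclusion.

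The only genuinely delicate point is the first paragraph: recognizing that the hypothesis $L(u,u)(e)=e$ translates, through the JB$^*$-algebra triple product formula, into $u\circ_e u^{*_e}=e$, and then noticing that the tripotent identity $\{u,u,u\}=u$ supplies the second relation $u^{2}\circ_e u^{*_e}=u$ needed to recognize Jordan invertibility with inverse $u^{*_e}$. Once these two identities are established, no estimation or spectral argument is required, and the statement follows by a direct appeal to Lemma \ref{l tech 1}.
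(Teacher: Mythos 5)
Your proof is correct, and it reaches the key intermediate fact --- that $u$ is invertible in the unital JB$^*$-algebra $E_2(e)$ --- by a genuinely different route than the paper. The paper argues at the operator level: from $L(u,u)(e)=e$ it deduces $e\in E_2(u)$, uses Peirce arithmetic to show $E_2(e)=E_2(u)$, and then observes that $T=Q(u)|_{E_2(e)}$ satisfies $T^2=P_2(u)|_{E_2(e)}=Id_{E_2(e)}$, whence $U_u$ is an invertible operator and $u$ is invertible in $E_2(e)$; Lemma \ref{l tech 1} then finishes. You instead work purely algebraically: you translate the hypothesis into the identity $u\circ_e u^{*_e}=e$ via the triple product formula (a correct computation, since $L(u,u)(e)=\{u,u,e\}$ is computed in the subtriple $E_2(e)$), extract the second identity $u^2\circ_e u^{*_e}=u$ from tripotency $\{u,u,u\}=u$, and recognize these two relations as exactly the paper's stated definition of Jordan invertibility, with the explicit inverse $u^{-1}=u^{*_e}$. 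Both proofs then invoke Lemma \ref{l tech 1} identically (and your closing observation that for a tripotent $r(u)=u$, so BP-quasi-invertibility is literally completeness, makes explicit a step the paper leaves implicit). What each approach buys: yours is shorter, requires no Peirce arithmetic, and exhibits the inverse concretely; the paper's establishes the stronger intermediate fact $E_2(u)=E_2(e)$ (i.e., that $u$ is a unitary tripotent of $E_2(e)$), which is precisely the information recycled later in Remark \ref{r unitaries}.
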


\begin{proof} Since $L(u,u)e =e$, we deduce that $e\in E_2 (u)$. By Peirce arithmetic, for each $x\in E,$ $$Q(e) (x) = Q(e) P_2(u)(x) \in E_2 (u),$$ which implies $E_2 (e) = P_2 (e) (E) = Q(e)^2 (E)\subseteq E_2 (u).$ Since, we also have $L(e,e) (u)=u$, we get $E_2 (e) = E_2 (u).$ Therefore, the mapping $T= Q(u)|_{E_2(e)}:E_2 (e) \to E_2(2)$ satisfies that $T^2 = P_2 (u)|_{E_2(e)} = P_2 (e)|_{E_{2} (e)}$ is the identity on $E_2 (e)$.\smallskip

Since the triple product of $E_2 (e)$ is given by $\{ a,b,c\} = (a \circ_e b^{*_e}) \circ_e c + (c\circ_e b^{*_e}) \circ_e a - (a\circ_e c) \circ_e b^{*_e}$ ($a,b,c\in E_2 (e)$), we can easily see that $T(x) = U_{u} (x^*)$ and hence $U_{u}$ is an invertible operator in $L(E_2(e))$. We have therefore proved that $u$ is an invertible element in $E_2(e)$. Lemma \ref{l tech 1} gives the desired statement.
\end{proof}

Lemma 4 in \cite{Sidd2007} proves that for every complete tripotent $e$ in a JB$^*$-triple $E$, every unitary element in the JB$^*$-algebra $E_2(e)$ is an extreme point of the closed unit ball of $E$ (i.e. a complete tripotent of $E$). This statement follows as a direct consequence of the above Lemma \ref{l unitary in Peirce-2 of a complete tripotent is complete}. Concretely, let $u$ be a unitary element in the JB$^*$-algebra $E_2 (e)$ (i.e. $u$ is invertible in $E_2 (e)$ with $u^{-1} = u^{*_e}$). Since the triple product on $E_2 (e)$ is given by $$\{ a,b,c\} = (a \circ_e b^{*_e}) \circ_e c + (c\circ_e b^{*_e}) \circ_e a - (a\circ_e c) \circ_e b^{*_e}$$ ($a,b,c\in E_2 (e)$), we can easily see that $\{ u, u, e \} = (u \circ_e u^{*_e}) \circ_e e + (e\circ_e u^{*_e}) \circ_e u - (u\circ_e e) \circ_e u^{*_e}= e$, and Lemma \ref{l unitary in Peirce-2 of a complete tripotent is complete} gives the statement.\smallskip

The following remark clarifies the connections between Lemma \ref{l tech 1}, Corollary \ref{c distance to an extreme}, Lemma \ref{l unitary in Peirce-2 of a complete tripotent is complete}, and \cite[Lemma 4]{Sidd2007}.

\begin{remark}\label{r unitaries} Let $e$ be a complete tripotent in a JB$^*$-triple $E$ and let $v$ be a tripotent in $E_2 (e)$. Then the following statements are equivalent.\begin{enumerate}[$(a)$]\item $v$ is invertible in the JB$^*$-algebra $E_2 (e)$; \item $v$ is a unitary element in the JB$^*$-algebra $E_2 (e)$;
\item $v$ is a unitary element in the JB$^*$-triple $E_2 (e)$;
\item $L(v,v) (e) =e.$
\end{enumerate}

The implication $(a)\Rightarrow (b)$ is established in Remark \ref{r range tripotent of an invertible element in Peirce 2 is unitary}. The implication $(c)\Rightarrow (d)$ is clear. To see $(b)\Rightarrow (c)$, we recall that the triple product in $E_2 (e)$ is given by $$\{ a,b,c\} = (a \circ_e b^{*_e}) \circ_e c + (c\circ_e b^{*_e}) \circ_e a - (a\circ_e c) \circ_e b^{*_e} \ (a,b,c\in E_2 (e)).$$ Since for each $a\in E_2 (e)$, we have $U_v (a^{*_e}) = Q(v) (a)$ (where $U_b (c) := 2 (b \circ_e c^{*_e}) \circ_e b - (b\circ_e b) \circ_e c^{*_e},$ for all  $b,c\in E_2 (e)$), we can deduce that $$P_2 (v) (a) = Q(v)^2 (a) = U_v (U_v (a^{*_e})^{*_e}) =  U_v U_{v^{*_e}} (a) = a,$$ for every $a\in E_2 (e),$ which shows that $P_2 (v) |_{E_2 (e)} = Id_{E_2 (e)},$ and hence $v$ is a unitary tripotent in $E_2 (e).$ To prove  $(d)\Rightarrow (a)$, we recall that $L(v,v)(e)=e$ implies that $e\in E_2 (v),$ and hence $E_2 (e) = E_2 (v)$ because $v\in E_2(e)$, which proves $(d)\Rightarrow (c)$. Furthermore, recalling that $Id_{E_2(e)} = P_2 (v)|_{E_2 (e)} = U_v U_{v^{*_e}},$ we obtain $(a)$.\smallskip

Consider now the statements:
\begin{enumerate}[$(e)$]
\item[$(e)$] $v$ is an extreme point of $\left( E_2 (e)\right)_1,$ or equivalently, $v$ is a complete tripotent in $E_2 (e);$
\item[$(f)$] $v$ is a complete tripotent in $E$.
\end{enumerate}

It should be noted that $(e)\nRightarrow (f) \Rightarrow (e)$, while $(f)$ do not necessarily imply any of the above statements $(a)$-$(d).$ We consider, for example, an infinite-dimensional complex Hilbert space $H$, a complete tripotent $e\in L(H)$ such that $e e^*=1$ and $p= e^* e\neq 1$. In this case $ L(H)_2 (e) = L(H) e^* e$. The element $p$ is a complete tripotent in $L(H)_2 (e)$, and since $0\neq 1-p \perp p$ it follows that $p$ is not complete in $L(H)$ (this shows that $(e)\nRightarrow (f)$). To see the second claim, pick a complete partial isometry $v\in L(e^*e (H))$ such that $vv^*\neq e^* e$ and $v^* v= e^*e.$ It is easy to see that $v$ is a complete tripotent in $L(H)_2 (e)$ and $L(v,v) (e) = \frac12 ( v v^* e + e v^* v) = \frac12 ( v v^*  + e )\neq e.$\smallskip

For more information on extreme points and unitary elements in C$^*$-algebras, JB$^*$-triples and JB-algebras the reader is referred to \cite{Kad51}, \cite{AkWe}, \cite{Ro2010}, \cite{LeNgWong} and \cite[\S 2]{FerMarPe}.
\end{remark}

\section{Distance to the extremals and the $\lambda$-function}

In this section we shall give some formulas to compute the distance from an element in a JB$^*$-triple $E$ to the set $\mathfrak{E} (E_1)$ of extreme points of the closed unit ball of $E$. Since, in some cases, $\mathfrak{E} (E_1)$ may be an empty set, we shall assume that $\mathfrak{E} (E_1)\neq  \emptyset.$\smallskip

Let $E$ be a JB$^*$-triple. According to the terminology employed in \cite{BrowPed95,BrowPed97} and \cite{TahSiddJam2013,TahPhD} and \cite{TahSiddJam2014}, we define $\alpha_q : E \to \mathbb{R}_0^+$, by $\alpha_q (x) = \hbox{dist} (x,E_q^{-1}).$ Inspired by the studies of Brown and Pedersen, we also introduce a mapping $m_q : E\to \mathbb{R}_0^+$ defined by $$m_q (x) :=\left\{
                                                                       \begin{array}{lc}
                                                                         0, & \hbox{ if } x\notin E_q^{-1}; \\
                                                                         \ & \ \\
                                                                         \left(\gamma^{q}(x)\right)^{\frac12}, & \hbox{ if } x\in E_q^{-1}.\\
                                                                       \end{array}
                                                                     \right.$$ Let us notice that, for each $x\in E_q^{-1}$, $$m_q (x) = \inf\Big\{t : t\in \hbox{Sp}(x)\cap [0,\infty) \Big\}= \max\Big\{\varepsilon>0 : ]-\varepsilon, \varepsilon[\cap \hbox{Sp}(x) = \emptyset\Big\},$$ and $m_q (x)>0$ if, and only if, $x\in E_q (x).$\smallskip

We claim that \begin{equation}\label{eq mq is 1-hom} m_q (\lambda x) = |\lambda| m_q (x),
\end{equation} for every $\lambda\in \mathbb{C}\backslash\{0\},$ $a\in E$. Indeed, since $$\left(\mathbb{C}\backslash\{0\}\right) E_q^{-1}= E_q^{-1} \hbox{ and } \mathbb{C} \left(E\backslash E_q^{-1}\right)= E\backslash E_q^{-1},$$ we may reduce to the case $a\in E_q^{-1}$ (compare $(\Sigma.v)$ and $(\Sigma.iii)$). Since $L(\lambda a, \lambda a) = |\lambda|^2 L(a,a)$, it follows that $\Sigma (\lambda a) = |\lambda|^2 \Sigma (a),$ which gives $m_q (\lambda a) = \inf \{ \sqrt{t} : t\in \Sigma (\lambda a)\} = |\lambda| m_q (a).$ \smallskip

As in the C$^*$-algebra setting, our next result shows that $m_q$ is actually a distance (compare \cite[Proposition 1.5]{BrowPed95} for the result in the setting of C$^*$-algebras).

\begin{theorem}\label{t mq distance} Let $E$ be a JB$^*$-triple, then $$m_q (a) = \hbox{dist} (a,E\backslash E_q^{-1}),$$ for every $a\in E.$ In particular, $m_q (a) = \hbox{dist} (a,E\backslash E_q^{-1})= \left(\gamma^q (a)\right)^{\frac12},$ for every $a\in E_q^{-1}.$
\end{theorem}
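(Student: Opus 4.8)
The statement splits according to whether $a$ is Brown-Pedersen quasi-invertible. If $a \in E \setminus E_q^{-1}$, then $m_q(a) = 0$ by definition, while $a$ itself belongs to the set $E \setminus E_q^{-1}$, so that $\hbox{dist}(a, E\setminus E_q^{-1}) = 0$ and there is nothing to prove. The whole content lies in the case $a \in E_q^{-1}$, which I would treat as follows. Write $e := r(a)$ for the range tripotent of $a$; by the characterization of BP-quasi-invertibility recalled before Lemma \ref{l new characterization BPqinv}, $e$ is a complete tripotent of $E$ and $a$ is a positive invertible element of the unital JB$^*$-algebra $E_2(e)$. I would also recall that $m_q(a) = \inf\{t : t \in \hbox{Sp}(a)\cap[0,\infty)\} = \min S_a$, and that for the positive element $a\in E_2(e)$ the set $S_a$ coincides with the spectrum of $a$ in the JB$^*$-algebra $E_2(e)$; in particular $\sigma_{E_2(e)}(a) \subseteq [m_q(a), \|a\|]$ with $m_q(a) = \min \sigma_{E_2(e)}(a)$, so that the inverse $a^{-1}$ computed in $E_2(e)$ has norm $1/m_q(a)$.

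For the inequality $\hbox{dist}(a, E\setminus E_q^{-1}) \geq m_q(a)$ I would show that every $b \in E$ with $\|b-a\| < m_q(a)$ is BP-quasi-invertible. Since $a = P_2(e)(a)$ and $P_2(e)$ is contractive, one has $\|P_2(e)(b) - a\| \leq \|b - a\| < m_q(a) = 1/\|a^{-1}\|$; by the openness of the set of invertible elements of the unital Jordan Banach algebra $E_2(e)$, in its standard quantitative form, $P_2(e)(b)$ is then invertible in $E_2(e)$. Lemma \ref{l tech 1} immediately yields $b \in E_q^{-1}$, and hence no element of $E\setminus E_q^{-1}$ lies within distance $m_q(a)$ of $a$.

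For the reverse inequality $\hbox{dist}(a, E\setminus E_q^{-1}) \leq m_q(a)$ the plan is to exhibit an explicit non-BP-quasi-invertible element at distance exactly $m_q(a)$ from $a$. Setting $m := m_q(a)$ and $b := a - m\, e \in E_2(e)$, one gets $\|a - b\| = m\,\|e\| = m$. Since $a$ is positive in $E_2(e)$ with $\min \sigma_{E_2(e)}(a) = m$, the element $b$ is positive in $E_2(e)$ with $0 \in \sigma_{E_2(e)}(b) = S_b$, whence $0 \in \hbox{Sp}(b)$. By property $(\Sigma.v)$ this forces $b$ to fail to be von Neumann regular, so $b \notin E_q^{-1}$, and therefore $\hbox{dist}(a, E\setminus E_q^{-1}) \leq m$. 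Combining the two inequalities gives the desired equality, while the ``in particular'' clause is immediate, since $m_q(a) = (\gamma^q(a))^{\frac12}$ for $a \in E_q^{-1}$ by definition of $m_q$.

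The step I expect to require the most care is the bookkeeping that identifies the bottom of the triple spectrum $S_a$ with the bottom of the JB$^*$-algebra spectrum $\sigma_{E_2(e)}(a)$, together with the spectral-shift computation $\sigma_{E_2(e)}(a - me) = \sigma_{E_2(e)}(a) - m \ni 0$ and the equality $S_b = \sigma_{E_2(e)}(b)$ for the positive element $b$; this is precisely what links the purely triple-theoretic quantity $m_q(a)$ to the Jordan-algebraic invertibility exploited in Lemma \ref{l tech 1}. Everything else reduces to the contractivity of $P_2(e)$, the openness of the invertible elements of $E_2(e)$, and property $(\Sigma.v)$.
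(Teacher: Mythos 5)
Your strategy is essentially the paper's: the upper bound is witnessed by the same element $b=a-m_q(a)\,r(a)$, and the lower bound is obtained by projecting a nearby element onto the Peirce-2 space of $r(a)$ and invoking Lemma \ref{l tech 1}. Your only real deviation is in the lower bound, where you replace the paper's normalization trick (reduce to $\|a\|\leq 1$, estimate $\|P_2(r(a))(z)-r(a)\|<m_q(a)+(1-m_q(a))=1$ via the triangle inequality through $r(a)$, then rescale using the homogeneity of $m_q$) by the quantitative claim that $\|P_2(e)(b)-a\|<1/\|a^{-1}\|$ forces invertibility in $E_2(e)$. Be careful here: this sharp bound is not simply ``standard openness'' in an arbitrary unital Jordan Banach algebra; it does hold in your situation because $a$ is \emph{positive}. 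Writing $P_2(e)(b)=U_{a^{1/2}}\bigl(e-U_{a^{-1/2}}(a-P_2(e)(b))\bigr)$ and using $U_{a^{1/2}}^{-1}=U_{a^{-1/2}}$ together with the fundamental identity, invertibility of $P_2(e)(b)$ reduces to $\|U_{a^{-1/2}}(a-P_2(e)(b))\|<1$, and $\|U_{a^{-1/2}}\|=\|U_{a^{-1/2}}(e)\|=\|a^{-1}\|$ because $U_{a^{-1/2}}$ is a positive map. With that justification supplied, this half of your argument is correct and slightly shorter than the paper's.

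The genuine gap is in the upper bound. You assert $\sigma_{E_2(e)}(b)=S_b$ for $b=a-m_q(a)e$ and deduce from $0\in\sigma_{E_2(e)}(b)$ that $0\in\hbox{Sp}(b)$, hence that $b$ is not von Neumann regular. For a positive element the algebra spectrum and the triple spectrum agree only away from the origin: $0\in\sigma_{E_2(e)}(b)$ says $b$ is not invertible, whereas $0\in\hbox{Sp}(b)$ says $b$ is not von Neumann regular, and the two conditions differ precisely when $0$ is an isolated point of $\sigma_{E_2(e)}(b)$, that is, when $m_q(a)$ is isolated in $S_a$ --- which happens for \emph{every} BP-quasi-invertible element with finite triple spectrum, in particular in all finite-dimensional examples. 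Concretely, in $E=M_2(\CC)$ take $a=\hbox{diag}(1/2,1)$: then $m_q(a)=1/2$ and $b=\hbox{diag}(0,1/2)$ is von Neumann regular ($S_b=\{1/2\}$, so $0\notin\hbox{Sp}(b)$), so your stated reason for $b\notin E_q^{-1}$ fails. The conclusion is nevertheless true and the repair is short: when $m_q(a)$ is isolated in $S_a$, the spectral tripotent $v=\chi_{\{m_q(a)\}}$ is a nonzero element of $E_a\cong C(S_a)$ with $\{b,b,v\}=0$ (disjoint supports), so $v\in\{b\}^{\perp}$ by \eqref{ref orthogo} and Lemma \ref{l new characterization BPqinv} yields $b\notin E_q^{-1}$; when $m_q(a)$ is not isolated, your spectral argument is valid as it stands. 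You should know that the paper's own proof makes the same unqualified assertion (that $a-m_q(a)r(a)$ ``contains zero in its triple spectrum''), so it shares this lacuna; but as written, your step, like the paper's, fails on the two-by-two example above, and the case distinction (or some equivalent appeal to Lemma \ref{l new characterization BPqinv}) is needed.
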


\begin{proof} We can assume that $a\in E_q^{-1}$. By $(\Sigma.iii)$ and $(\Sigma.v)$, $S_a:=\hbox{Sp}(a)\cap [0,\infty)$ is a compact subset of $\RR,$ $S_a \subseteq [0,\|a\|],$ $\|a\|=\max \left(S_a\right)$, $0< m_q (a) = \left(\gamma^q (a)\right)^{\frac12} =\min \left(S_a\right)$, and there exists a unique triple isomorphism $\Psi : E_a \to C_0 (S_a\cup \{0\})= C(S_a)$ such that $\Psi (a) (s) = s$ for every $s\in S_a.$ The range tripotent $r(a)$ coincides with the unit element in $C(S_a)$. Clearly, $y_0 = a - m_q (a) r(a)$ lies in $E_a\subseteq E$ and contains zero in its triple spectrum, therefore $y_0\in E\backslash E_q^{-1}$. Since $\|a-y_0\| =\| m_q (a) r(a)\| = m_q (a),$ we get $m_q (a) \geq \hbox{dist} (a,E\backslash E_q^{-1}).$\smallskip

To prove the reverse inequality, we first assume that $\|a\|\leq 1$. Arguing by reduction to the absurd, we suppose that $m_q (a) > \hbox{dist} (a,E\backslash E_q^{-1}),$ then there exists $z\in E\backslash E_q^{-1}$ with $\|a-z\|<m_q(a) = \left(\gamma^q (a)\right)^{\frac12}$. Since $a\in E_q^{-1}$, its range tripotent, $r(a),$ is a complete tripotent in $E$, and $a$ is a positive, invertible element in $E_2 (r(a))$. The contractivity of $P_2 (r(a))$, assures that $$\|a- P_2 (r(a)) (z)\| = \| P_2 (r(a)) (a-z)\| \leq \|a-z\| <m_q (a). $$ Now, we compute the distance $$\|P_2 (r(a)) (z)-r(a)\| \leq \| P_2 (r(a)) (z)-a\| + \| a-r(a)\| $$ $$< m_q (a) + \max\{1-m_q (a), \|a\|-1\} = 1.$$ The general theory of invertible elements in JB$^*$-algebras shows that the element $P_2 (r(a)) (z)$ is invertible in $E_2 (r(a))$, because $r(a)$ is the unit element in the latter JB$^*$-algebra. Lemma \ref{l tech 1} implies that $z\in E_q^{-1}$, which contradicts that $z\in E\backslash E_q^{-1}$. We have therefore proved that $m_q (a) = \hbox{dist} (a,E\backslash E_q^{-1}),$ for every $a\in E_q^{-1}$ with $\|a\|\leq 1$. \smallskip

Finally, given $a\in E_q^{-1},$ we have $$m_q \left(\frac{a}{\|a\|}\right) =  \hbox{dist} \left(\frac{a}{\|a\|},E\backslash E_q^{-1}\right),$$ and $\|a\| m_q \Big(\frac{a}{\|a\|}\Big) = m_q (a)$. Therefore, $$ m_q \left(a\right)=\|a\| m_q \left(\frac{a}{\|a\|}\right) \leq \|a\| \left\|\frac{a}{\|a\|} -c \right\| = \Big\|a - {\|a\|}c \Big\| $$ for every $c\in E\backslash E_q^{-1}$, which shows that $$m_q (a) \leq \hbox{dist} \Big({a},\|a \| \left(E\backslash E_q^{-1}\right) \Big) =  \hbox{dist} (a,E\backslash E_q^{-1}).$$
\end{proof}

It was already noticed in \cite[Lemma 25]{TahSiddJam2013} that $$\alpha_q (\lambda x) = |\lambda| \alpha_q (x); \ \alpha_q (x) \leq \| x\|$$ and $$|\alpha_q (x) -\alpha_q (y)| \leq \|x-y\|,$$ for every $x,y\in E$, $\lambda \in \mathbb{C}$. Theorem \ref{t mq distance} implies that \begin{equation}\label{eq mq Lipschitzian} | m_q (x) -m_q (y)| \leq \|x-y\|,
 \end{equation} for every $x,y\in E$.\smallskip

Our next goal is an extension of \cite[Theorem 2.3]{BrowPed97} to the more general setting of JB$^*$-triples, and determines the distance from a BP-quasi-invertible element in a JB$^*$-triple $E$ to the set of extreme points in $E_1$.

\begin{proposition}\label{p distance to the extremals from a BP qinv element} Let $a$ be a BP-quasi-invertible element in a JB$^*$-triple $E$. Then $$\hbox{dist} (a, \mathfrak{E} (E_1)) = \max \left\{ 1- m_q (a) , \|a\|-1\right\}.$$
\end{proposition}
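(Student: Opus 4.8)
The plan is to prove the two inequalities separately: I would use the range tripotent $r(a)$ as an explicit, optimally close extreme point to get the upper bound, and I would combine the distance formula of Theorem~\ref{t mq distance} with Corollary~\ref{c distance to an extreme} to get the matching lower bound.

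For the upper bound, recall that since $a\in E_q^{-1}$ the range tripotent $r(a)$ is a complete tripotent of $E$, hence $r(a)\in\mathfrak{E}(E_1)$, and $a$ is a positive invertible element of the unital JB$^*$-algebra $E_2(r(a))$ with unit $r(a)$. Identifying $E_a$ with $C_0(S_a\cup\{0\})$ as in $(\Sigma.iii)$, the element $a$ corresponds to $s\mapsto s$ on the compact set $S_a=\hbox{Sp}(a)\cap[0,\infty)$, whose minimum is $m_q(a)$ and whose maximum is $\|a\|$. Thus $a-r(a)$ is self-adjoint in $E_2(r(a))$ and corresponds to $s\mapsto s-1$, so its norm is the spectral radius
$$\|a-r(a)\|=\max_{s\in S_a}|s-1|=\max\{\,|m_q(a)-1|,\ |\,\|a\|-1\,|\,\}.$$
Since $0\le m_q(a)\le\|a\|$, a short elementary check shows that the term $m_q(a)-1$ is dominated by $\|a\|-1$ and the term $1-\|a\|$ by $1-m_q(a)$, so the right-hand side equals $\max\{1-m_q(a),\|a\|-1\}$. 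This gives $\hbox{dist}(a,\mathfrak{E}(E_1))\le\max\{1-m_q(a),\|a\|-1\}$.

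For the lower bound I would fix an arbitrary $v\in\mathfrak{E}(E_1)$, that is, an arbitrary complete tripotent, and set $\rho=\|a-v\|$. The bound $\rho\ge\|a\|-\|v\|=\|a\|-1$ is immediate. The main point is $\rho\ge 1-m_q(a)$, which I would extract from Theorem~\ref{t mq distance} by showing that no element outside $E_q^{-1}$ lies within distance $1-\rho$ of $a$: if $\|z-a\|<1-\rho$ then $\|z-v\|\le\|z-a\|+\|a-v\|<(1-\rho)+\rho=1$, so Corollary~\ref{c distance to an extreme} forces $z\in E_q^{-1}$. Hence $\hbox{dist}(a,E\backslash E_q^{-1})\ge 1-\rho$, and Theorem~\ref{t mq distance} yields $m_q(a)\ge 1-\|a-v\|$, i.e. $\|a-v\|\ge 1-m_q(a)$. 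Taking the infimum over $v\in\mathfrak{E}(E_1)$ gives $\hbox{dist}(a,\mathfrak{E}(E_1))\ge\max\{1-m_q(a),\|a\|-1\}$, and combining with the upper bound completes the proof.

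I expect the only genuine subtlety to be the spectral bookkeeping in the upper bound: verifying that the spectrum of $a$ in $E_2(r(a))$ is exactly $S_a$ with extreme values $m_q(a)$ and $\|a\|$, so that $\|a-r(a)\|$ is correctly computed as a spectral radius. By contrast the lower bound is essentially formal once Theorem~\ref{t mq distance} and Corollary~\ref{c distance to an extreme} are available; the whole content there is the triangle-inequality trick that converts closeness of $z$ to $a$ into closeness of $z$ to the complete tripotent $v$.
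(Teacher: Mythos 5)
Your proposal is correct and is essentially the paper's own proof. The upper bound (producing the extreme point $r(a)$ and computing $\|a-r(a)\| = \max\{1-m_q(a),\|a\|-1\}$ via the identification $E_a \cong C(S_a)$) is identical to the paper's, and your lower bound is the paper's argument unwound: where the paper invokes the Lipschitz estimate \eqref{eq mq Lipschitzian} to write $m_q(a) \geq m_q(e) - \|e-a\| = 1-\|e-a\|$ for $e\in\mathfrak{E}(E_1)$, you re-derive the same inequality by combining the triangle inequality with Corollary \ref{c distance to an extreme} and Theorem \ref{t mq distance}, which is the same mathematical content in slightly more explicit form.
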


\begin{proof} Again, by $(\Sigma.iii)$ and $(\Sigma.v)$, the set $S_a:=\hbox{Sp}(a)\cap [0,\infty)$ is a compact subset of $\RR,$ $S_a \subseteq [0,\|a\|],$ $\|a\|=\max \left(S_a\right)$, $0< m_q (a) =\min \left(S_a\right)$, and there exists a unique triple isomorphism $\Psi : E_a \to C(S_a)$ such that $\Psi (a) (s) = s$ for every $s\in S_a,$ and the range tripotent $r(a)$ coincides with the unit element in $C(S_a)$. Since $r(a)\in \mathfrak{E} (E_1)$ and $$\hbox{dist} (a, \mathfrak{E} (E_1))\leq \|a-r(a)\| = \max\left\{1- m_q (a) ,  \|a\|-1 \right\}.$$

Given $e\in \mathfrak{E} (E_1)$, we always have $\|a-e\|\geq  \|a\|-1$. Since $$m_q (a) =  m_q (e-(e-a)) | \geq m_q (e)- \|e-a\| = 1-\|e-a\|,$$ we also have $\hbox{dist} (a, \mathfrak{E} (E_1))\geq \max \left\{ 1- m_q (a) ,  \|a\|-1 \right\}$.
\end{proof}

\begin{corollary}\label{c characterization of extreme points among BP-qinv} Let $E$ be a JB$^*$-triple. Then $$\left\{ a\in E_q^{-1} : \|a\|= m_q (a) =\left(\gamma^q (a)\right)^{\frac12}\right\} = ]0,\infty[ \ \mathfrak{E} (E_1).$$ $\hfill\Box$
\end{corollary}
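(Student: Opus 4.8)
The plan is to prove the set equality in Corollary \ref{c characterization of extreme points among BP-qinv} by a double inclusion, using Proposition \ref{p distance to the extremals from a BP qinv element} together with the elementary observation that a BP-quasi-invertible element $a$ satisfies $\hbox{dist}(a,\mathfrak{E}(E_1))=0$ if, and only if, $a$ itself is an extreme point of $E_1$ (equivalently, a complete tripotent), since $\mathfrak{E}(E_1)$ is norm-closed inside $E_1$. The key link is Proposition \ref{p distance to the extremals from a BP qinv element}, which tells us that for $a\in E_q^{-1}$ we have $\hbox{dist}(a,\mathfrak{E}(E_1))=\max\{1-m_q(a),\|a\|-1\}$, and this maximum vanishes exactly when both $m_q(a)\geq 1$ and $\|a\|\leq 1$, i.e. when $\|a\|=m_q(a)=1$ (recalling that $m_q(a)\leq\|a\|$ always, which follows from $m_q(a)=\min(S_a)\leq\max(S_a)=\|a\|$).

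First I would establish the inclusion $\supseteq$. Take $a\in\,]0,\infty[\,\mathfrak{E}(E_1)$, say $a=t\,v$ with $t>0$ and $v\in\mathfrak{E}(E_1)$ a complete tripotent. Since complete tripotents are BP-quasi-invertible (being von Neumann regular with $r(v)=v$ a complete tripotent, via Lemma \ref{l new characterization BPqinv}), and $E_q^{-1}$ is invariant under nonzero scalar multiplication, we get $a\in E_q^{-1}$. For the tripotent $v$ we have $S_v=\{1\}$, whence $m_q(v)=1=\|v\|$; using the homogeneity $m_q(t\,v)=|t|\,m_q(v)=t$ from \eqref{eq mq is 1-hom} and $\|a\|=t\|v\|=t$, we obtain $\|a\|=m_q(a)=t=(\gamma^q(a))^{\frac12}$, placing $a$ in the left-hand set.

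For the reverse inclusion $\subseteq$, suppose $a\in E_q^{-1}$ with $\|a\|=m_q(a)$. Set $t:=\|a\|>0$ (the element is nonzero since $a\in E_q^{-1}$). Consider $b:=a/t$, which again lies in $E_q^{-1}$ with $\|b\|=1$ and $m_q(b)=m_q(a)/t=1$ by homogeneity. Applying Proposition \ref{p distance to the extremals from a BP qinv element} to $b$ gives $\hbox{dist}(b,\mathfrak{E}(E_1))=\max\{1-m_q(b),\|b\|-1\}=\max\{0,0\}=0$. Since $\mathfrak{E}(E_1)$ is norm-closed in $E$ (the set of complete tripotents is closed, being the intersection of the closed unit sphere with the closed set of tripotents), the distance-zero condition forces $b\in\mathfrak{E}(E_1)$; hence $a=t\,b\in\,]0,\infty[\,\mathfrak{E}(E_1)$.

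The main obstacle, and the point I would want to pin down carefully, is the justification that $\hbox{dist}(b,\mathfrak{E}(E_1))=0$ actually yields $b\in\mathfrak{E}(E_1)$, which rests on the closedness of the set of complete tripotents. This is where one invokes the identification of $\mathfrak{E}(E_1)$ with the complete tripotents (via \cite[Lemma 4.1]{BraKaUp78} and \cite[Proposition 3.5]{KaUp77}) and the fact that a norm-limit of tripotents of norm one is again a tripotent of norm one by continuity of the triple product; completeness is preserved because, by Corollary \ref{c distance to an extreme}, any element within distance $<1$ of a complete tripotent is itself BP-quasi-invertible, and a tripotent that is BP-quasi-invertible is complete. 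Everything else reduces to the scalar homogeneity of $m_q$ recorded in \eqref{eq mq is 1-hom} and the clean formula of Proposition \ref{p distance to the extremals from a BP qinv element}.
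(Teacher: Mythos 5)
Your proof is correct, and every step you invoke is available in the paper. The paper itself states this corollary with no written proof (the $\Box$ marks it as immediate from Proposition \ref{p distance to the extremals from a BP qinv element}), and the intended shortcut is a bit more direct than your route: since $m_q(a)=\min (S_a)$ and $\|a\|=\max (S_a)$ for $a\in E_q^{-1}$ (by $(\Sigma.iii)$ and $(\Sigma.vi)$), the hypothesis $\|a\|=m_q(a)=t$ forces the triple spectrum $S_a$ to be the singleton $\{t\}$, so $E_a\cong C(\{t\})$ and $a=t\,r(a)$, where $r(a)\in \mathfrak{E}(E_1)$ precisely because $a$ is BP-quasi-invertible; equivalently, in the proof of the Proposition the distance to $\mathfrak{E}(E_1)$ is attained at $r(a)$, so your normalized element $b$ satisfies $\|b-r(b)\|=\max\{1-m_q(b),\|b\|-1\}=0$ outright. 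This avoids the one extra ingredient your argument requires, namely that $\mathfrak{E}(E_1)$ is norm-closed, so that distance zero implies membership. That closedness claim is true, and your justification of it is sound and self-contained: a norm-limit of tripotents is a tripotent by continuity of the triple product, and completeness passes to the limit because Corollary \ref{c distance to an extreme} makes the limit BP-quasi-invertible, while a BP-quasi-invertible tripotent equals its own range tripotent and is therefore complete. So nothing is missing in your argument; it is simply a longer path to the same endpoint. Your $\supseteq$ inclusion coincides with the natural one: $S_v=\{1\}$ for a complete tripotent $v$, the homogeneity \eqref{eq mq is 1-hom}, and the invariance of $E_q^{-1}$ under nonzero scalars.
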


Our next result is a first estimate for the $\lambda$-function, it can be regarded as an appropriate triple version of \cite[Theorem 3.1]{BrowPed97} and \cite[Lemma 2.4]{Sidd2011NYconvex}.

\begin{theorem}\label{t lambda function 1 from a BPqinv} Let $a$ be a BP-quasi-invertible element in the closed unit ball of a JB$^*$-triple $E$. Then for every $\lambda\in [\frac12, \frac{1+m_q (a)}{2}]$ there exist $e,u$ in $\mathfrak{E} (E_1)$ satisfying $$a = \lambda e + (1-\lambda) u.$$ When $1\geq \lambda > \frac{1+m_q (a)}{2}$ such a convex decomposition cannot be obtained. Consequently, $\lambda (a) = \frac{1+m_q (a)}{2},$ for every $a\in E_q^{-1}\cap E_1.$
\end{theorem}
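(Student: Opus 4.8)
The plan is to use the continuous triple functional calculus on $E_a$ to reduce the statement to a concrete function-space computation, and then invoke the earlier lemmas to verify that the resulting summands are genuine extreme points.

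We start from the identification, valid since $a\in E_q^{-1}$, of $E_a$ with $C(S_a)$ via the triple isomorphism $\Psi$, where $S_a = \hbox{Sp}(a)\cap [0,\infty) \subseteq [m_q(a), 1]$ (using $\|a\|\le 1$, so $\max(S_a)=\|a\|\le 1$ and $\min(S_a)=m_q(a)>0$), and $r(a)$ corresponds to the unit $\one$ of $C(S_a)$. Here $r(a)$ is a complete tripotent of $E$, so $E_2(r(a))$ is a unital JB$^*$-algebra in which $a$ is positive and invertible. The idea is: given $\lambda\in[\frac12,\frac{1+m_q(a)}{2}]$, I want to write $a=\lambda e + (1-\lambda)u$ with $e,u$ unitaries in the JB$^*$-algebra $E_2(r(a))$; by the discussion following Lemma \ref{l unitary in Peirce-2 of a complete tripotent is complete} (i.e. \cite[Lemma 4]{Sidd2007}), every unitary of $E_2(r(a))$ is a complete tripotent of $E$, hence an extreme point of $E_1$. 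So I would seek $e,u$ of the form $e = f_t(a)$, $u = g_t(a)$ for suitable continuous functions $f,g$ on $S_a$ of modulus one, forcing $\lambda f(s) + (1-\lambda) g(s) = s$ for all $s\in S_a$.

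The concrete construction: for each fixed $s\in S_a$ I need two unimodular complex numbers whose convex combination with weights $\lambda, 1-\lambda$ equals $s\in[m_q(a),1]$. Geometrically this is always possible precisely when $s$ lies in the lens-shaped region $\{\lambda e^{i\theta}+(1-\lambda)e^{i\varphi}\}$, whose intersection with the positive reals is the interval $[|2\lambda-1|, 1]$; writing $e=e^{i\theta}$, $u=e^{-i\theta}$ (conjugate phases) gives $\lambda e + (1-\lambda)u$ real iff I choose them symmetrically, and the real value ranges over $[(2\lambda-1),1]$ as $\theta$ varies, assuming $\lambda\ge\frac12$. So I set $e(s)=e^{i\theta(s)}$, $u(s)=e^{-i\theta(s)}$ with $\theta(s)=\arccos\!\big(\frac{s-(2\lambda-1)}{2(1-\lambda)}\cdot\frac{1}{\,\cdot\,}\big)$ chosen so that the equation holds; the key inequality to check is $s\ge 2\lambda-1$ for all $s\in S_a$, i.e. $m_q(a)\ge 2\lambda-1$, which is exactly the hypothesis $\lambda\le\frac{1+m_q(a)}{2}$. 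Continuity of $\theta(s)$ on $S_a$ (it is a composition of $\arccos$ with an affine map staying in the valid domain) guarantees $e,u\in C(S_a)\equiv E_a$, and since $|e(s)|=|u(s)|=1$ these correspond to unitaries in $E_2(r(a))$.

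For the converse bound, when $\lambda>\frac{1+m_q(a)}{2}$ I would argue that a decomposition $a=\lambda v+(1-\lambda)y$ with $v\in\mathfrak{E}(E_1)$ and $y\in E_1$ is impossible: writing $\lambda v = a-(1-\lambda)y$ and using $\|y\|\le 1$, one gets a contradiction via the Lipschitz estimate \eqref{eq mq Lipschitzian}, namely $m_q(a)\ge m_q(\lambda v)-(1-\lambda)\|y\|\ge \lambda m_q(v)-(1-\lambda) = \lambda-(1-\lambda)=2\lambda-1 > m_q(a)$, using $m_q(v)=1$ for the extreme point $v$ (equation \eqref{eq mq is 1-hom} and $m_q$ of a complete tripotent equalling $1$). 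This forces $\lambda\le\frac{1+m_q(a)}{2}$, so $\lambda(a)=\sup\mathcal{S}(a)=\frac{1+m_q(a)}{2}$. I expect the main obstacle to be the first part, specifically verifying that the phase function $\theta(s)$ is continuous up to and including the endpoint $s=m_q(a)$ where $\theta$ may hit the boundary value $\pi$ of its range (when $\lambda=\frac{1+m_q(a)}{2}$ exactly), and carefully confirming that $f_t(a)$, $g_t(a)$ really land as unitaries in $E_2(r(a))$ rather than merely unimodular elements of $E_a$ — this is where the reduction to \cite[Lemma 4]{Sidd2007} via the functional calculus must be stated precisely.
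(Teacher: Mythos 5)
Your overall route is the same as the paper's: reduce via the triple functional calculus to $C(S_a)$, write the identity function as a convex combination of two unimodular functions, and lift these back to $\mathfrak{E}(E_1)$ through Lemma \ref{l unitary in Peirce-2 of a complete tripotent is complete} (equivalently \cite[Lemma 4]{Sidd2007}). Your impossibility argument for $\lambda>\frac{1+m_q(a)}{2}$, based on \eqref{eq mq is 1-hom}, \eqref{eq mq Lipschitzian} and $m_q(v)=1$ for $v\in\mathfrak{E}(E_1)$ (available, e.g., from Corollary \ref{c characterization of extreme points among BP-qinv}), is correct, and in fact slightly more direct than the paper's, which routes the same inequality through the distance formula of Proposition \ref{p distance to the extremals from a BP qinv element}. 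Your final worry is also harmless: a unimodular $v\in C(S_a)\equiv E_a$ satisfies $L(v,v)(r(a))=r(a)$ computed inside $E_a$, and since $r(a)$ is complete (because $a\in E_q^{-1}$), Lemma \ref{l unitary in Peirce-2 of a complete tripotent is complete} applies verbatim to show $\Psi^{-1}(v)\in\mathfrak{E}(E_1)$.

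The genuine gap is in your explicit construction of the two unimodular functions, which is the heart of the existence half. The conjugate-phase ansatz fails: for $\lambda>\frac12$,
$$\lambda e^{i\theta}+(1-\lambda)e^{-i\theta}=\cos\theta+i(2\lambda-1)\sin\theta,$$
which is real only when $\sin\theta=0$; as $\theta$ varies this traces an ellipse meeting $\mathbb{R}$ in $\{\pm 1\}$, not the interval $[2\lambda-1,1]$, so symmetric phases produce no decomposition of any $s\in(2\lambda-1,1)$ (they work only when $\lambda=\frac12$) --- this is also why your formula for $\theta(s)$ comes out garbled, with an unfilled placeholder. The pointwise geometric fact you quote (the range of $\lambda z_1+(1-\lambda)z_2$ over unimodular $z_1,z_2$ is the annulus $|2\lambda-1|\leq |z|\leq 1$) is correct, but realizing it continuously requires \emph{asymmetric} phases: for $\frac12\leq\lambda<1$ set
$$e(s)=e^{i\alpha(s)} \ \hbox{ with } \ \cos\alpha(s)=\frac{s^2+2\lambda-1}{2\lambda s}, \qquad u(s)=\frac{s-\lambda e(s)}{1-\lambda},$$
where the constraint $\bigl|s^2+2\lambda-1\bigr|\leq 2\lambda s$ for $0<s\leq 1$ is equivalent to $s\geq 2\lambda-1$, i.e.\ to $m_q(a)\geq 2\lambda-1$; then $|u(s)|=1$ by construction, both functions lie in $C(S_a)$ because $s\geq m_q(a)>0$ and $\arccos$ is continuous on $[-1,1]$ (so, contrary to your expectation, no endpoint problem arises even when $\lambda=\frac{1+m_q(a)}{2}$), and the case $\lambda=1$ forces $m_q(a)=1$, i.e.\ $a\in\mathfrak{E}(E_1)$, where the statement is trivial. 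Alternatively you can simply quote \cite[Lemma 6]{KadPed85} or \cite[Lemma 2.4]{Sidd2011NYconvex}, which is exactly what the paper does at this step.
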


\begin{proof} The range tripotent $r(a)\in \mathfrak{E} (E_1)$ is the unit element of subtriple $E_a\equiv C(S_a)$, where $S_a:=\hbox{Sp}(a)\cap [0,\infty)$ is a compact subset of $\RR,$ $S_a \subseteq [0,\|a\|],$ $\|a\|=\max \left(S_a\right)$, $0< m_q (a) =\min \left(S_a\right)$ and there exits a triple isomorphism $\Psi: E_a \to C(S_a)$ such that $\Psi (a) (s) = s$ ($s\in S_a$). It is part of the folklore in C$^*$-algebra theory that for every $\lambda\in [\frac12, \frac{1+m_q (a)}{2}]$, the function $\Psi (a): s\mapsto s$ can be written in the form $$\Psi (a) = \lambda v_1 + (1-\lambda) v_2,$$ where $v_1,v_2$ are two unitary elements in $C(S_a)$ (see \cite[Lemma 6]{KadPed85} or \cite[Lemma 2.4]{Sidd2011NYconvex} for a proof in a more general setting). Since $v_1,v_2$ are unitary elements in $E_a \equiv C(S_a)$ and $r(a)$ is an extreme point of the closed unit ball of $E$, the tripotents $e= \Psi^{-1} (v_1)$ and $u= \Psi^{-1} (v_2)$ belong to $\mathfrak{E} (E_1)$ (cf. Lemma \ref{l unitary in Peirce-2 of a complete tripotent is complete}) and $a = \lambda e + (1-\lambda) u.$\smallskip

Given $1\geq \lambda > \frac{1+m_q (a)}{2}$, if we assume that  $a = \lambda e + (1-\lambda) y,$ where $e\in \mathfrak{E} (E_1)$ and $y\in E_1$, we have $$\|x-u\| = (1-\lambda) \|y-e \| \leq 2 (1-\lambda),$$ which shows that $\hbox{dist} (a, \mathfrak{E} (E_1)) \leq 2 (1-\lambda)$. However, by Proposition \ref{p distance to the extremals from a BP qinv element}, $1-m_q(a) = \hbox{dist} (a, \mathfrak{E} (E_1))$, and hence $\lambda \leq \frac{1+m_q (a)}{2},$ which is impossible.
\end{proof}

Our next result was in \cite[Theorem 3.5]{TahSiddJam2014}. We can give now an alternative proof from the above results.

\begin{corollary}\label{c new characterization of BPQI} Let $E$ be a JB$^{*}$-triple. Let $a$ be an element in $E_{1}$. Then $a\in E_{q}^{-1}$ if and only if $a= \alpha v_{1}+(1-\alpha)v_{2}$ for some extreme points $v_{1},v_{2}$ in $\mathcal{E}(E_{1})$ and $0\leq \alpha < \frac{1}{2}$.
\end{corollary}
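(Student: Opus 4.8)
The plan is to establish the two implications separately, drawing the ``only if'' direction from Theorem \ref{t lambda function 1 from a BPqinv} and the ``if'' direction from Corollary \ref{c distance to an extreme}; the statement is essentially a repackaging of these two earlier results once the coefficient is renamed.

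For the ``only if'' implication, suppose $a\in E_1\cap E_q^{-1}$. Then $m_q(a)>0$, since BP-quasi-invertibility forces $0\notin \hbox{Sp}(a)$ and hence $m_q(a)=\min S_a>0$; moreover $m_q(a)\leq \|a\|\leq 1$ because $a\in E_1$. I would then apply Theorem \ref{t lambda function 1 from a BPqinv} at the right endpoint $\lambda=\frac{1+m_q(a)}{2}$ of the admissible interval, obtaining $e,u\in\mathfrak{E}(E_1)$ with
$$a=\frac{1+m_q(a)}{2}\,e+\frac{1-m_q(a)}{2}\,u.$$
Setting $\alpha:=\frac{1-m_q(a)}{2}$, $v_1:=u$ and $v_2:=e$, this reads $a=\alpha v_1+(1-\alpha)v_2$ with $v_1,v_2\in\mathfrak{E}(E_1)$, and the bounds $0<m_q(a)\leq 1$ translate precisely into $0\leq\alpha<\frac12$.

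For the ``if'' implication, suppose $a=\alpha v_1+(1-\alpha)v_2$ with $v_1,v_2\in\mathfrak{E}(E_1)$ and $0\leq\alpha<\frac12$. Recalling that the extreme points of $E_1$ are exactly the complete tripotents of $E$, I would estimate the distance from $a$ to the complete tripotent $v_2$:
$$\|a-v_2\|=\|\alpha v_1-\alpha v_2\|=\alpha\,\|v_1-v_2\|\leq 2\alpha<1.$$
Corollary \ref{c distance to an extreme} then yields $a\in E_q^{-1}$ immediately.

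There is no genuine obstacle here, as both directions are direct consequences of results already proved; the only point requiring care is confirming, in the first implication, that the endpoint choice $\lambda=\frac{1+m_q(a)}{2}$ produces a coefficient $\alpha$ landing in the half-open interval $[0,\frac12)$ rather than merely $[0,\frac12]$. This is exactly where the strict positivity $m_q(a)>0$, valid for any BP-quasi-invertible element, is used.
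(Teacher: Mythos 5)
Your proof is correct and follows essentially the same route as the paper: the forward implication via Theorem \ref{t lambda function 1 from a BPqinv} (the paper picks any $\lambda$ in $(\frac12, \frac{1+m_q(a)}{2}]$, you pick the endpoint, which also cleanly covers the trivial case $a\in\mathfrak{E}(E_1)$), and the converse via the estimate $\|a-v_2\|\leq 2\alpha<1$ and Corollary \ref{c distance to an extreme}. No gaps; the endpoint bookkeeping with $m_q(a)>0$ is exactly the point that needs care, and you handle it.
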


\begin{proof} ($\Rightarrow$) Since $a \in E_{q}^{-1}\backslash \mathfrak{E} (E_1)$, the distance $m_q$, satisfies $0< m_q (a) < 1$, and hence $(\frac12, \frac{1+m_q (a)}{2}]\neq \emptyset$. Take $\lambda \in (\frac12, \frac{1+m_q (a)}{2}]$. Theorem \ref{t lambda function 1 from a BPqinv} implies the existence of $v_1,v_2$ in $\mathcal{E}(E_{1})$ satisfying $a = \lambda v_2 + (1-\lambda) v_1$. The statement follows for $\alpha = 1-\lambda$.\smallskip

($\Leftarrow$) Note that $\|a - v_{2}\| =\alpha\ \| v_{1} - v_{2}\| < 1$. Corollary \ref{c distance to an extreme} implies that $a\in (\mathcal{J})_{q}^{-1}$.
\end{proof}

In \cite[Theorem 26]{TahSiddJam2013} the authors show that given a complete tripotent $e$ in a JB$^*$-triple $E$ (i.e. $e\in \mathfrak{E} (E_1)$), then for each element $a$ in $E_2 (e)\backslash E_q^{-1}$ we have: $$\hbox{dist} (a, \mathfrak{E} (E_1)) \geq \max \left\{ 1+ \alpha_q (a) ,  \|a\|-1\right\}.$$ Our next result shows that there is no need to assume that the element $a$ lies in the Peirce-2 subspace of a complete tripotent to prove the same inequality.

\begin{theorem}\label{t ineq distance to the extrem from a non BP qinv} Let $E$ be a JB$^*$-triple satisfying $\mathfrak{E} (E_1)\neq  \emptyset.$ Then the inequalities $$1+\|a\|\geq \hbox{dist} (a, \mathfrak{E} (E_1)) \geq \max \left\{ 1+ \alpha_q (a) ,  \|a\|-1  \right\},$$ hold for every $a$ in $E\backslash E_q^{-1}$.
\end{theorem}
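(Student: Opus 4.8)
The plan is to establish the three inequalities separately, since they are largely independent. The upper bound $\hbox{dist}(a,\mathfrak{E}(E_1))\leq 1+\|a\|$ is immediate: for any $e\in\mathfrak{E}(E_1)$ we have $\|e\|=1$ (complete tripotents have norm one), so $\|a-e\|\leq \|a\|+\|e\|=\|a\|+1$, and taking the infimum over $e$ gives the claim. The lower bound $\hbox{dist}(a,\mathfrak{E}(E_1))\geq \|a\|-1$ is equally routine: for each extreme point $e$, the reverse triangle inequality yields $\|a-e\|\geq \|a\|-\|e\|=\|a\|-1$.

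The substantive inequality is $\hbox{dist}(a,\mathfrak{E}(E_1))\geq 1+\alpha_q(a)$, and this is where the argument requires care. First I would recall that, by definition, $\alpha_q(a)=\hbox{dist}(a,E_q^{-1})$. The natural strategy is to exploit the fact that extreme points of $E_1$ are themselves BP-quasi-invertible: indeed, every $e\in\mathfrak{E}(E_1)$ is a complete tripotent, hence von Neumann regular with $r(e)=e$ complete, so $e\in E_q^{-1}$ by the characterization recalled before Lemma \ref{l new characterization BPqinv}. Fix an arbitrary $e\in\mathfrak{E}(E_1)$ and set $r:=\|a-e\|$. The goal is to show $r\geq 1+\alpha_q(a)$, equivalently $\alpha_q(a)\leq r-1$, which forces $r\geq 1$ along the way.

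The key step, and the one I expect to be the main obstacle, is to manufacture, from the single extreme point $e$, a BP-quasi-invertible element lying within distance $r-1$ of $a$. The idea is as follows. Since $e$ is a complete tripotent and $\|a-e\|=r$, contractivity of the Peirce-$2$ projection $P_2(e)$ gives $\|P_2(e)(a)-e\|=\|P_2(e)(a-e)\|\leq r$. One would like to slide $a$ toward $e$ to enter $E_q^{-1}$: consider the element $z_t:=a+t\,e$ for suitable $t\geq 0$, or more precisely, rescale so that the Peirce-$2$ component becomes invertible in the unital JB$^*$-algebra $E_2(e)$. Concretely, I would look for a scalar $s>0$ such that $\|P_2(e)(a)-s\,e\|<s$, since then $P_2(e)(a)$ is invertible in $E_2(e)$ (as $\|s^{-1}P_2(e)(a)-e\|<1$), whence Lemma \ref{l tech 1} yields $a\in E_q^{-1}$ after the appropriate translation; one estimates the smallest such translation in terms of $r$. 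The arithmetic should show that the element $a+(r-1)\,e$ (or a comparable perturbation of size $r-1$) is BP-quasi-invertible, giving $\alpha_q(a)\leq \|a-(a+(r-1)e)\|=(r-1)\|e\|=r-1$. Taking the infimum over all $e\in\mathfrak{E}(E_1)$ then produces $\alpha_q(a)\leq \hbox{dist}(a,\mathfrak{E}(E_1))-1$, which is the desired inequality. The delicate point is verifying that the perturbation of size exactly $r-1$ already lands in $E_q^{-1}$; this is the JB$^*$-triple analogue of the corresponding C$^*$-algebra computation in \cite[Theorem 26]{TahSiddJam2013}, and the Peirce calculus together with Lemma \ref{l tech 1} and Corollary \ref{c distance to an extreme} should carry it through once $a$ is decomposed relative to $e$.
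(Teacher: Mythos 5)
Your plan is correct, and its skeleton coincides with the paper's: fix $e\in\mathfrak{E}(E_1)$, translate $a$ by a positive multiple of $e$ to produce a BP-quasi-invertible element, deduce $\alpha_q(a)\leq\beta-1$ whenever $\beta>\|a-e\|$, and finish by letting $\beta$ decrease and taking the infimum over $e$ (the outer bounds $1+\|a\|$ and $\|a\|-1$ are the triangle inequality in both treatments; the paper leaves the upper one implicit). Where you genuinely diverge is in the certificate that the translate $a+(\beta-1)e$ lies in $E_q^{-1}$. The paper gets this in one line from the metric theory of $m_q$: by homogeneity \eqref{eq mq is 1-hom} and the Lipschitz estimate \eqref{eq mq Lipschitzian} (the latter a consequence of Theorem \ref{t mq distance}), one has $m_q(a+(\beta-1)e)=m_q(\beta e+(a-e))\geq \beta-\|a-e\|>0$. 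You instead argue through the Peirce-$2$ component: $\|P_2(e)(a+(\beta-1)e)-\beta e\|=\|P_2(e)(a-e)\|\leq\|a-e\|<\beta$, so $P_2(e)(a)+(\beta-1)e$ is invertible in the unital JB$^*$-algebra $E_2(e)$, and Lemma \ref{l tech 1} applies. Your route is more elementary and self-contained---it bypasses Theorem \ref{t mq distance} altogether and in effect re-runs the mechanism underlying both that theorem and Corollary \ref{c distance to an extreme}---at the cost of redoing an invertibility estimate that $m_q$ packages for free. Two refinements would complete your write-up. First, the ``delicate point'' you flag is a real obstruction: the translate of size exactly $\|a-e\|-1$ need not be BP-quasi-invertible (take $a=0$; the translate is $0\notin E_q^{-1}$), because your estimate only yields $\|P_2(e)(a-e)\|\leq\|a-e\|$, not a strict inequality; the fix is precisely to work with $\beta>\|a-e\|$ and pass to the limit, which is what the paper's quantification over $\beta$ accomplishes. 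Second, the case $\|a-e\|<1$ must be excluded at the outset, since otherwise the translation has norm $1-\beta$ rather than $\beta-1$ and the estimate says nothing; this is exactly Corollary \ref{c distance to an extreme} (equivalently, the $\beta=1$ instance of your own computation, which makes $P_2(e)(a)$ itself invertible and hence $a\in E_q^{-1}$, contradicting the hypothesis), so the corollary you cite at the end is in fact needed at the beginning.
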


\begin{proof} Let us fix $a$ in $E\backslash E_q^{-1}$. Clearly, for each $e\in \mathfrak{E} (E_1)$, $\|a-e\| \geq \left| \|a\|-1 \right|$, and hence $$\hbox{dist} (a, \mathfrak{E} (E_1)) \geq \left| \|a\|-1 \right|.$$

Fix an arbitrary $e\in \mathfrak{E} (E_1)$. If $\|a-e\|<\beta$ then $\beta > 1$, otherwise $\|a-e\|<1$ and Corollary \ref{c distance to an extreme} implies that $a\in E_q^{-1}$, which is impossible. Now, the inequality $$m_q ((\beta-1) e +a) = m_q (\beta e + a-e) \geq m_q (\beta e) -\|a-e\| = \beta -\|a-e\| > 0,$$ shows that $(\beta-1) e +a$ lies in $E_q^{-1}$. Then $$\alpha_q (a) \leq \|a - ((\beta-1) e +a)\| = \beta-1.$$ This proves that $$\alpha_q (a) +1 \leq \beta,$$ for every $e\in \mathfrak{E} (E_1)$ and $\beta > \|a-e\|$, witnessing that $\hbox{dist} (a, \mathfrak{E} (E_1)) \geq 1+ \alpha_q (a)$.\end{proof}

\begin{corollary}\label{c lambda function estimates from a non BPqinv} Let $E$ be a JB$^*$-triple satisfying $\mathfrak{E} (E_1)\neq  \emptyset.$ Then $$\lambda (a)\leq \frac12 (1-\alpha_q (a)),$$ for every $a\in E_1\backslash E_q^{-1}.$
\end{corollary}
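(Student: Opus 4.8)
The plan is to derive this corollary directly from the distance estimate in Theorem~\ref{t ineq distance to the extrem from a non BP qinv}, mirroring the way Theorem~\ref{t lambda function 1 from a BPqinv} deduces the value of $\lambda$ from the distance formula in Proposition~\ref{p distance to the extremals from a BP qinv element}. The guiding observation is that any amenable decomposition of $a$ forces $a$ to be close to an extreme point, so a lower bound on $\mathrm{dist}(a,\mathfrak{E}(E_1))$ translates immediately into an upper bound on $\lambda(a)$.

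First I would fix $a\in E_1\backslash E_q^{-1}$ and take an arbitrary amenable triplet $(e,y,\lambda)$ for $a$, so that $a=\lambda e+(1-\lambda)y$ with $e\in\mathfrak{E}(E_1)$, $y\in E_1$, and $0<\lambda\le 1$. The key computation is then the elementary estimate
$$\|a-e\| = \|\lambda e+(1-\lambda)y - e\| = (1-\lambda)\,\|y-e\| \le 2(1-\lambda),$$
using $\|y-e\|\le\|y\|+\|e\|\le 2$. Since $e\in\mathfrak{E}(E_1)$, this gives $\mathrm{dist}(a,\mathfrak{E}(E_1))\le 2(1-\lambda)$, and hence for every $\lambda\in\mathcal{S}(a)$ we obtain $\lambda \le 1-\tfrac12\,\mathrm{dist}(a,\mathfrak{E}(E_1))$.

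Next I would invoke Theorem~\ref{t ineq distance to the extrem from a non BP qinv}, which applies precisely because $a\in E\backslash E_q^{-1}$ and $\mathfrak{E}(E_1)\neq\emptyset$, to get $\mathrm{dist}(a,\mathfrak{E}(E_1))\ge 1+\alpha_q(a)$. Substituting this into the bound above yields
$$\lambda \le 1 - \tfrac12\bigl(1+\alpha_q(a)\bigr) = \tfrac12\bigl(1-\alpha_q(a)\bigr)$$
for every $\lambda\in\mathcal{S}(a)$. Taking the supremum over $\mathcal{S}(a)$ gives $\lambda(a)\le\tfrac12(1-\alpha_q(a))$, which is exactly the claimed inequality.

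I do not anticipate a serious obstacle here; the argument is short and self-contained once the two earlier results are in hand. The only point requiring a little care is the degenerate possibility that $\mathcal{S}(a)=\emptyset$, in which case $\lambda(a)=\sup\emptyset$; but under the standing hypothesis $\mathfrak{E}(E_1)\neq\emptyset$ and since $\alpha_q(a)\le\|a\|\le 1$ guarantees the right-hand side is nonnegative, the inequality holds vacuously or trivially, so no separate treatment is needed.
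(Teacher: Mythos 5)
Your proof is correct and follows essentially the same route as the paper's: fix an arbitrary amenable triplet $(e,y,\lambda)$, estimate $\|a-e\|=(1-\lambda)\|y-e\|\leq 2(1-\lambda)$, and combine this with the lower bound $\hbox{dist}(a,\mathfrak{E}(E_1))\geq 1+\alpha_q(a)$ from Theorem~\ref{t ineq distance to the extrem from a non BP qinv} to conclude $\lambda\leq\frac12(1-\alpha_q(a))$ for every $\lambda\in\mathcal{S}(a)$. Your additional remark about the degenerate case $\mathcal{S}(a)=\emptyset$ is a harmless refinement the paper leaves implicit.
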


\begin{proof} Let us fix $a\in E_1 \backslash E_q^{-1}$. By Theorem \ref{t ineq distance to the extrem from a non BP qinv} we have $$\hbox{dist}(a,\mathfrak{E} (E_1)\geq \max \{ \alpha_q (a) +1 , \|a\|-1\}. $$ Thus, if $a$ writes in the form $a = \lambda e + (1-\lambda) y$, where $e\in \mathfrak{E} (E_1)$, $y\in E_1$ and $0\leq\lambda \leq 1$ we have $a-e = (\lambda-1) e+ (1-\lambda) y,$ which gives $$\alpha_q (a) + 1 \leq \hbox{dist}(a, \mathfrak{E} (\mathcal{J})_1) \leq \|a-e\| = |1-\lambda| \|y-e\| \leq 2 (1-\lambda),$$ which proves $\lambda \leq \frac12 (1-\alpha_q (a))$.
\end{proof}

\section{The $\lambda$-function of a JBW$^*$-triple}

We can present now a precise description of the $\lambda$-function in the case of a JBW$^*$-triple. The main goal of this section is to prove that every JBW$^*$-triple satisfies the uniform $\lambda$-property, extending the result established by Pedersen in \cite[Theorem 4.2]{Ped91} in the context of von Neumann algebras.\smallskip

First we observe that whenever we replace JB$^*$-triples with JBW$^*$-triples the $\alpha_q$ function is much more simpler to compute on the closed unit ball.

\begin{proposition}\label{p alpha_q in JBW} Let $W$ be a JBW$^*$-triple. Then, for each $a$ in $W_1$ we have $$\hbox{dist} (a, \mathfrak{E} (W_1)) = 1- m_q (a).$$ In particular $\alpha_q (a) = 0,$ for every $a\in W_1\backslash W_q^{-1}.$
\end{proposition}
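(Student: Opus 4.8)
The plan is to split according to whether $a$ is Brown-Pedersen quasi-invertible, and to reduce the substantive work to producing, for a non-BP-quasi-invertible $a\in W_1$, a single extreme point at distance at most $1$.

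First I would dispose of the case $a\in W_1\cap W_q^{-1}$. Here Proposition \ref{p distance to the extremals from a BP qinv element} gives $\hbox{dist}(a,\mathfrak{E}(W_1))=\max\{1-m_q(a),\|a\|-1\}$. Since $\|a\|\leq 1$ forces $\|a\|-1\leq 0$, while $m_q(a)=(\gamma^q(a))^{\frac12}\leq\|a\|\leq 1$ gives $1-m_q(a)\geq 0$, the maximum equals $1-m_q(a)$, as required.

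The real case is $a\in W_1\backslash W_q^{-1}$, where $m_q(a)=0$ and I must show $\hbox{dist}(a,\mathfrak{E}(W_1))=1$. The lower bound is immediate from Theorem \ref{t ineq distance to the extrem from a non BP qinv}, which yields $\hbox{dist}(a,\mathfrak{E}(W_1))\geq 1+\alpha_q(a)\geq 1$. For the upper bound I would exploit that $W$ is a dual space: the range tripotent $r=r(a)$ lies in $W$ itself, and $a\in W_2(r)$ is positive with $\|a\|\leq 1$, so $r-a$ is positive of norm at most $1$ in the unital JB$^*$-algebra $W_2(r)$, giving $\|a-r\|\leq 1$. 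Since the Peirce projection $P_0(r)$ is weak$^*$-continuous, $W_0(r)$ is again a JBW$^*$-triple, hence full of complete tripotents; I pick a complete tripotent $w\in W_0(r)$ (and take $w=0$ if $W_0(r)=\{0\}$, in which case $r$ is already complete in $W$). Using $r\perp w$ one checks, via the joint Peirce decomposition of the orthogonal pair, that $e:=r+w$ is a complete tripotent of $W$: from $L(e,e)=L(r,r)+L(w,w)$ the Peirce-$0$ space of $e$ equals $W_0(r)\cap W_0(w)$, which is precisely the Peirce-$0$ space of $w$ computed inside $W_0(r)$, and hence $\{0\}$ by completeness of $w$.

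It then remains to estimate $\|a-e\|$. Writing $a-e=(a-r)-w$ with $a-r\in W_2(r)$ and $w\in W_0(r)$, the Peirce relation $\{W_2(r),W_0(r),W\}=0$ shows $a-r\perp w$, so orthogonality gives $\|a-e\|=\max\{\|a-r\|,\|w\|\}\leq\max\{1,1\}=1$. Thus $\hbox{dist}(a,\mathfrak{E}(W_1))\leq 1$, forcing equality with $1=1-m_q(a)$; together with the lower bound this also yields $1+\alpha_q(a)\leq 1$, i.e. $\alpha_q(a)=0$, proving the final assertion. The main obstacle is the construction and completeness verification of $e=r+w$: one must use crucially that $r(a)$ genuinely lives in $W$ (not merely in $W^{**}$) and that adjoining a complete tripotent of the Peirce-$0$ JBW$^*$-triple $W_0(r)$ produces a complete tripotent of the ambient $W$. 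Once this is in place, the norm computation is routine through orthogonality.
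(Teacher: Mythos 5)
Your proof is correct, and in the crucial case $a\in W_1\setminus W_q^{-1}$ it takes a genuinely different (and more direct) route than the paper. Both arguments share the same skeleton: settle the BP-quasi-invertible case by Proposition \ref{p distance to the extremals from a BP qinv element}, get the lower bound $\hbox{dist}(a,\mathfrak{E}(W_1))\geq 1+\alpha_q(a)\geq 1$ from Theorem \ref{t ineq distance to the extrem from a non BP qinv}, and for the upper bound produce a complete tripotent close to $a$ by enlarging a tripotent attached to $a$ and invoking the orthogonal-sum norm formula of \cite[Lemma 1.3(a)]{FriRu85}. The difference is which tripotent gets enlarged, and how. The paper works in the function model $W_a\cong C_0(S_a\cup\{0\})$, truncates to $r_\delta=\chi_{]\delta,\|a\|]}$, majorizes $r_\delta$ by a complete tripotent via Horn's lemma \cite[Lemma 3.12]{Horn87}, and only obtains $\|a-e\|\leq 1+\delta$ (the truncation costs the error term $\|P_0(r_\delta)(a)\|\leq\delta$), so a limiting argument $\delta\to 0$ is required. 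You instead use the full range tripotent $r=r(a)$, which in a JBW$^*$-triple lies in $W$ itself, exploit positivity of $a$ in $W_2(r)$ to get the exact bound $\|a-r\|\leq 1$, and since $P_0(r)(a)=0$ the orthogonal decomposition $a-e=(a-r)-w$ gives $\|a-e\|\leq 1$ in one shot, with no function model and no limit. Moreover, where the paper cites Horn, you rebuild the complete extension by hand: $W_0(r)$ is weak$^*$-closed (separate weak$^*$-continuity of the triple product makes $P_0(r)$ weak$^*$-continuous), hence a JBW$^*$-triple possessing a complete tripotent $w$, and the joint Peirce decomposition of the orthogonal pair $(r,w)$ yields $W_0(r+w)=W_0(r)\cap W_0(w)=\{0\}$, so $e=r+w$ is complete in $W$. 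Both of these auxiliary facts are standard and correctly used; they are the only steps whose justification you should keep explicit (in particular the compatibility of the Peirce projections of orthogonal tripotents behind $W_0(r+w)=W_0(r)\cap W_0(w)$). The net effect is that your argument is cleaner and sharper than the paper's, at the modest cost of reproving the special case of Horn's lemma that the paper simply cites.
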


\begin{proof} When $a\in W_q^{-1}$ the statement follows from Proposition \ref{p distance to the extremals from a BP qinv element}. Let us assume that $a\notin W_q^{-1}$, then $0$ is not an isolated point in $S_a$ (cf. $(\Sigma.i)$). One more time, we shall identify $W_a$ (the (norm-closed) JB$^*$-subtriple of $W$ generated by $a$) with $C_0 (S_a\cup \{0\})$. Therefore, for each $\delta>0$ the sets $]\delta,\|a\|]\cap S_a$ and $]0, \delta]\cap S_a$ are non-empty. The characteristic function $r_\delta=\chi_{_{]\delta,\|a\|]}}\in \left(W_a\right)^{\sigma(W,W_*)}$ is a range tripotent of an element in $W_a$, and hence $r_{\delta}$ is a tripotent in $W$.\smallskip

By \cite[Lemma 3.12]{Horn87}, there exists $e\in \mathfrak{E} (W_1))$ such that $Q(e) (r_\delta) = r_{\delta},$ that is $e= r_{\delta} + (e-r_{\delta})$ and $ r_{\delta} \perp (e-r_{\delta})$. Since $P_1 (r_{\delta}) (a-e) =0,$ we can write $$a-e = P_2 (r_{\delta}) (a-e) + P_0 (r_{\delta}) (a-e)= P_2 (r_{\delta}) (a-r_{\delta}) + P_0 (r_{\delta}) (a-e).$$ Clearly, $$\|P_2 (r_{\delta}) (a-r_{\delta})\| = \max\{1-\delta, \|a\|-1 \} = 1-\delta,$$ while $\|P_0 (r_{\delta}) (a-e)\|\leq \|P_0 (r_{\delta}) (a)\|+ \|P_0 (r_{\delta}) (e) \|\leq 1+\delta.$ Now, observing that $P_2 (r_{\delta}) (a-r_{\delta}) \perp P_0 (r_{\delta}) (a-e)$, we deduce from \cite[Lemma 1.3$(a)$]{FriRu85} that $$\hbox{dist} (a, \mathfrak{E} (W_1))\leq \|a-e\|\leq \max\{1+\delta,1-\delta \} = 1+\delta.$$ The arbitrariness of $\delta>0$ implies that $\hbox{dist} (a, \mathfrak{E} (W_1))\leq 1$.

Finally, the equality $\hbox{dist} (a, \mathfrak{E} (W_1))= 1$ and the final statement follow from Theorem \ref{t ineq distance to the extrem from a non BP qinv}.
\end{proof}

The detailed description of the $\lambda$-function in the case of a JBW$^*$-triple reads as follows:

\begin{theorem}\label{t lambda function in JBW} Let $W$ be a JBW$^*$-triple. Then the $\lambda$-function on $W_1$ is given by the expression: $$\lambda (a) = \left\{\begin{array}{lc}
                     \frac{1+m_q (a)}{2}, & \hbox{ if } a\in W_1\cap W_q^{-1} \\
                     \ & \ \\
                     \frac12 (1-\alpha_q (a))=\frac12, & \hbox{ if } a\in W_1\backslash W_q^{-1}.
                   \end{array}
 \right.$$
\end{theorem}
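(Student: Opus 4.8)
The plan is to handle the two cases separately, each by combining a previously established estimate with a matching construction. For the Brown--Pedersen quasi-invertible case, $a \in W_1 \cap W_q^{-1}$, the result is immediate: Theorem \ref{t lambda function 1 from a BPqinv} already establishes that $\lambda(a) = \frac{1+m_q(a)}{2}$ for every BP-quasi-invertible element in the closed unit ball of \emph{any} JB$^*$-triple, so no JBW$^*$-specific argument is needed here. I would simply cite that theorem.

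The substance lies in the case $a \in W_1 \backslash W_q^{-1}$. Here I would first invoke Proposition \ref{p alpha_q in JBW}, which gives $\alpha_q(a) = 0$ for every non-BP-quasi-invertible element in $W_1$; this already reduces the claimed value to $\frac12(1-0) = \frac12$. Corollary \ref{c lambda function estimates from a non BPqinv} then supplies the upper bound $\lambda(a) \leq \frac12(1-\alpha_q(a)) = \frac12$. Thus the entire remaining task is the reverse inequality $\lambda(a) \geq \frac12$, i.e. the construction of amenable triplets $(e, y, \lambda)$ with $\lambda$ arbitrarily close to (or equal to) $\frac12$.

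For the lower bound I would mimic the decomposition appearing in the proof of Proposition \ref{p alpha_q in JBW}. Identifying $W_a$ with $C_0(S_a \cup \{0\})$, since $a \notin W_q^{-1}$ the origin is a non-isolated point of $S_a$ (by $(\Sigma.i)$), so for each small $\delta>0$ the range tripotent $r_\delta = \chi_{_{]\delta,\|a\|]}}$ is a genuine tripotent in $W$. Using \cite[Lemma 3.12]{Horn87} I would extend $r_\delta$ to a complete tripotent $e \in \mathfrak{E}(W_1)$ with $Q(e)(r_\delta) = r_\delta$ and $r_\delta \perp (e - r_\delta)$. The idea is then to write $a = \frac12 e + \frac12 y$, where $y = 2a - e$, and to verify that $y \in W_1$; by the orthogonality decomposition $a - \frac12 e = P_2(r_\delta)(a - \tfrac12 r_\delta) + P_0(r_\delta)(a - \tfrac12 e)$, controlling the norm of $y$ reduces to estimating two orthogonal Peirce components, which (as $\delta \to 0$) can be bounded by $1$ using \cite[Lemma 1.3$(a)$]{FriRu85}, exactly as in Proposition \ref{p alpha_q in JBW}. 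This yields amenable triplets with $\lambda = \frac12$ (or a sequence with $\lambda \to \frac12$), giving $\lambda(a) \geq \frac12$.

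The main obstacle will be the lower-bound construction: producing a genuine extreme point $e$ together with a point $y \in W_1$ realizing the splitting coefficient $\frac12$ exactly, rather than merely in the limit. The delicate point is that the Peirce-$0$ remainder $P_0(r_\delta)(a - \tfrac12 e)$ carries a factor depending on $e$, and one must check its norm does not exceed $1$; if an exact value of $\frac12$ proves awkward to attain at a fixed $\delta$, the cleanest remedy is to establish $\lambda(a) \geq \frac12 - \delta$ for all $\delta>0$ and then pass to the supremum, which suffices since $\lambda(a)$ is defined as a supremum. Either route closes the gap between the upper bound $\frac12$ from Corollary \ref{c lambda function estimates from a non BPqinv} and the desired equality.
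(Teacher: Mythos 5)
Your treatment of the BP-quasi-invertible case and of the upper bound $\lambda(a)\le\frac12$ coincides exactly with the paper's (Theorem \ref{t lambda function 1 from a BPqinv}, Proposition \ref{p alpha_q in JBW}, Corollary \ref{c lambda function estimates from a non BPqinv}); where you genuinely diverge is the lower bound, and there your route is different from, but ultimately as valid as, the paper's. The paper does not truncate: it extends the range tripotent $r(a)$ itself to a complete tripotent $e\in \mathfrak{E}(W_1)$ via \cite[Lemma 3.12]{Horn87}, observes that $a$ is then a positive, non-quasi-invertible element of the JBW$^*$-algebra $W_2(e)$, that the JBW$^*$-subalgebra $\mathcal{J}_{a,e}$ generated by $a$ and $e$ is an abelian von Neumann algebra (\cite[Lemma 4.1.11]{HancheStor}), and invokes Pedersen's theorem \cite[Theorem 4.2]{Ped91} to write $a=\frac12 e_1+\frac12 e_2$ with $e_1,e_2$ unitaries in $\mathcal{J}_{a,e}$, which are extreme points of $W_1$ by Lemma \ref{l unitary in Peirce-2 of a complete tripotent is complete}; thus the supremum is attained, with both pieces extremal. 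Your construction (truncations $r_\delta$, Horn's lemma, orthogonal Peirce estimates) is more self-contained, but note that your primary version with coefficient exactly $\frac12$ does not close at fixed $\delta$: the triangle inequality only yields $\|2P_0(r_\delta)(a)-(e-r_\delta)\|\le 1+2\delta$, which exceeds $1$ — precisely the delicate point you flagged. Your fallback is the correct fix and it does work: taking $\lambda_\delta=\frac{1-\delta}{2}$, one has $\|P_2(r_\delta)(a)-\lambda_\delta r_\delta\|\le \max\{\|a\|-\lambda_\delta,\lambda_\delta\}\le 1-\lambda_\delta$ and $\|P_0(r_\delta)(a)-\lambda_\delta(e-r_\delta)\|\le \delta+\lambda_\delta=1-\lambda_\delta$, so orthogonality (\cite[Lemma 1.3$(a)$]{FriRu85}) gives $\|a-\lambda_\delta e\|\le 1-\lambda_\delta$, an amenable triplet with coefficient $\lambda_\delta$, and hence $\lambda(a)\ge \sup_{\delta>0}\frac{1-\delta}{2}=\frac12$. (You could even attain $\frac12$ at fixed $\delta$ by applying Horn's lemma to $r(a)$ rather than to $r_\delta$, as the paper does: then $r(P_0(r_\delta)(a))$ and $r_\delta$ are orthogonal projections below $e$ in $W_2(e)$, so $P_0(r_\delta)(a)$ is positive of norm at most $\delta$ in the JBW$^*$-algebra $W_2(e-r_\delta)$, whence $\|2P_0(r_\delta)(a)-(e-r_\delta)\|\le 1$ by spectral calculus.) In summary: the paper's argument buys the stronger conclusion that every $a\in W_1\backslash W_q^{-1}$ is the exact midpoint of two extreme points, i.e.\ the supremum defining $\lambda(a)$ is attained; yours avoids Pedersen's von Neumann algebra theorem and the Hanche-Olsen--St{\o}rmer lemma entirely, staying within the tools already developed in the paper, at the price of realizing $\frac12$ only as a limit of admissible coefficients — which suffices, since $\lambda$ is defined as a supremum.
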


\begin{proof} The case $a\in W_1\cap W_q^{-1}$ follows from Theorem \ref{t lambda function 1 from a BPqinv}. Suppose $ a\in W_1\backslash W_q^{-1}.$ Corollary \ref{c lambda function estimates from a non BPqinv} and Proposition \ref{p alpha_q in JBW} imply that $\lambda (a)\leq \frac12 (1-\alpha_q (a)) = \frac12.$

Let $r=r(a)$ denote the range tripotent of $a$ in $W$. Let us observe that, by \cite[Lemma 3.12]{Horn87}, there exists a complete tripotent $e\in \mathfrak{E} (W_1))$ such that $e= r_{\delta} + (e-r_{\delta})$ and $ r_{\delta} \perp (e-r_{\delta})$. This implies that $a$ is a positive element in the closed unit ball of the JBW$^*$-algebra $W_2(e)$. Since $a\notin W_q^{-1}$, $0$ lies in the triple spectrum of $a$ (cf. $(\Sigma.v)$). Furthermore, the triple spectrum of $a$ does not change when computed as an element in $W_2(e)$ (see $(\Sigma.iv)$), thus $a$ is not BP-quasi-invertible in $W_2(e)$. Let $\mathcal{J}_{a,e}$ denote the JBW$^*$-algebra of $W_2(e)$ generated by $e$ and $a$. It is known that $\mathcal{J}_{a,e}$ is isometrically isomorphic, as JBW$^*$-algebra, to a an abelian von Neumann algebra with unit $e$ (compare \cite[Lemma 4.1.11]{HancheStor}). Since, in the terminology of \cite{BrowPed95,BrowPed97}, $a$ neither is quasi-invertible in the abelian von Neumann algebra
$\mathcal{J}_{a,e}$, we deduce, via \cite[Theorem 4.2]{Ped91}, that there exist unitary elements $e_1$ and $e_2$ in $\mathcal{J}_{a,e}$ satisfying $a = \frac12 e_1 + \frac12 e_2$. Since $e\in \mathfrak{E} (W_1)$ is the unit element in $\mathcal{J}_{a,e}$ and $e_1, e_2$ are unitary element in the latter von Neumann algebra, we conclude that $e_1,e_2 \in \mathfrak{E} (W_1)$ (cf. Lemma \ref{l unitary in Peirce-2 of a complete tripotent is complete} or \cite[Lemma 4]{Sidd2007}), which shows that $\frac12 \leq \lambda(a).$
\end{proof}

As in the C$^*$-setting, an element $a$ in the closed unit ball of a JBW$^*$-triple is BP-quasi-invertible if, and only if, $\lambda (a) >\frac12.$\smallskip

\begin{corollary}\label{c lambda property for JBW} Every JBW$^*$-triple satisfies the uniform $\lambda$-property.$\hfill\Box$
\end{corollary}

In \cite[\S 4]{TahSiddJam2014} (see also \cite[\S 5.3]{TahPhD}), the authors introduce the $\Lambda$-condition in the setting of JB$^*$-triples in the following sense: a JB$^*$-triple $E$ satisfies the $\Lambda$-condition if for each complete tripotent $e\in \mathfrak{E} (E)$ and every $a\in \left(E_2 (e)\right)_1\backslash E_q^{-1},$ the condition $\lambda (a)=0$ implies $\alpha_q (a)=1.$ We can affirm now that every JBW$^*$-triple actually satisfies a stronger property, because, by Theorem \ref{t lambda function in JBW} (see also Proposition \ref{p alpha_q in JBW}), the minimum value of the $\lambda$-function on the closed unit ball of a JBW$^*$-triple is $\frac12$ (compare with \cite[Theorem 4.2]{Ped91} for the appropriate result in von Neumann algebras).\smallskip

Our next goal is to complete the statement of Theorem \ref{t ineq distance to the extrem from a non BP qinv} in the case of a general JB$^*$-triple.

\begin{proposition}\label{p toward de Lamba condition for JB} Let $a$ and $b$ be elements in a JB$^*$-triple $E$. Suppose $\|a-b\|<\beta$ and $b\in E_q^{-1}$. Then $a + \beta r(b)\in E_q^{-1}$ and the inequality $$m_q (a+ \beta r(b)) \geq \beta -\|b-a\|,$$ holds. Furthermore, under the above conditions, the element $P_2 (r(b)) (a) +  \beta r(b)$ is invertible in the JB$^*$-algebra $E_2 (r(b))$.
\end{proposition}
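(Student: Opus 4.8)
The plan is to prove the three assertions in sequence, each following from the previous one together with the triple‐spectral and Peirce machinery already developed. Set $e := r(b)$, the range tripotent of $b$; since $b\in E_q^{-1}$, property (a) in the characterization of BP‐quasi‐invertibility tells us that $e$ is a complete tripotent of $E$ and $b$ is positive and invertible in the JB$^*$‐algebra $E_2(e)$, with unit $e$.

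\emph{Step 1: the conorm inequality.} First I would establish $m_q(a+\beta e)\geq \beta-\|b-a\|$. The key tool is the Lipschitz estimate \eqref{eq mq Lipschitzian}, namely $|m_q(x)-m_q(y)|\leq\|x-y\|$. Writing $a+\beta e = \beta e + (a-b) + b$, I want to compare $a+\beta e$ with $\beta e + b$. Since $b$ is positive and invertible in $E_2(e)$, the element $\beta e + b$ is again positive and invertible in $E_2(e)$ (its triple spectrum is $\beta + S_b\subseteq[\beta+m_q(b),\beta+\|b\|]$), so it lies in $E_q^{-1}$ with $m_q(\beta e+b)=\beta+m_q(b)\geq\beta$. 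Applying \eqref{eq mq Lipschitzian} to $x=a+\beta e$ and $y=b+\beta e$ gives
$$m_q(a+\beta e)\geq m_q(b+\beta e)-\|(a+\beta e)-(b+\beta e)\| \geq \beta - \|a-b\|.$$

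\emph{Step 2: membership in $E_q^{-1}$.} Since $\|a-b\|<\beta$, the right‐hand side $\beta-\|b-a\|$ is strictly positive, whence $m_q(a+\beta e)>0$. By the very definition of $m_q$ (it vanishes off $E_q^{-1}$), this forces $a+\beta e\in E_q^{-1}$, giving the first assertion and the displayed inequality simultaneously.

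\emph{Step 3: invertibility of $P_2(e)(a)+\beta e$ in $E_2(e)$.} For the final claim I would use the contractivity of the Peirce‐2 projection $P_2(e)$. Since $b\in E_2(e)$ we have $P_2(e)(b)=b$, so
$$\|\big(P_2(e)(a)+\beta e\big)-(b+\beta e)\| = \|P_2(e)(a-b)\|\leq\|a-b\|<\beta.$$
Now $b+\beta e$ is invertible in the unital JB$^*$‐algebra $E_2(e)$ (as noted in Step 1, its spectrum lies in $[\beta+m_q(b),\beta+\|b\|]\subset(0,\infty)$), and its distance in $E_2(e)$ to $P_2(e)(a)+\beta e$ is strictly less than $\beta$. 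Applying the invertible element $(b+\beta e)^{-1}$ via the operator $U_{(b+\beta e)^{-1}}$ reduces this to a perturbation of the unit $e$ of norm $<1$, which is invertible by the standard fact that $\|x-e\|<1\Rightarrow x\in E_2(e)^{-1}$; hence $P_2(e)(a)+\beta e$ is invertible in $E_2(e)$.

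The main obstacle is Step 1: one must be careful that the Lipschitz bound \eqref{eq mq Lipschitzian} is applied between two genuinely BP‐quasi‐invertible elements, since $m_q$ is only a distance to $E\setminus E_q^{-1}$ and the inequality is stated for all $x,y\in E$. The cleanest route is to verify directly that $b+\beta e\in E_q^{-1}$ with $m_q(b+\beta e)=\beta+m_q(b)$ by reading off the triple spectrum of $b+\beta e$ inside $E_2(e)\cong C(S_b)$ — here the fact that $b$ is \emph{positive} in $E_2(e)$ is essential, as it guarantees no cancellation occurs and the spectrum is simply translated by $\beta$. Once this is in hand, the remaining two steps are routine consequences of the definition of $m_q$ and of Lemma \ref{l tech 1}/the openness of invertibles in $E_2(e)$.
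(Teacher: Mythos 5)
Your proposal is correct and follows essentially the same route as the paper's own proof: the same splitting $a+\beta r(b) = (a-b) + \bigl(b+\beta r(b)\bigr)$, the same spectral computation $m_q\bigl(b+\beta r(b)\bigr)=\beta+m_q(b)$ (carried out in the subtriple generated by $b$ --- note this is $E_b\cong C(S_b)$, which contains $r(b)$ as its unit, rather than $E_2(r(b))$ as you wrote), the same appeal to the Lipschitz estimate \eqref{eq mq Lipschitzian} to get both the conorm inequality and membership in $E_q^{-1}$, and the same Peirce-2 perturbation argument for the final claim. One detail in Step 3 needs fixing: the operator that ``reduces to a perturbation of the unit'' is $U_{d^{-1/2}}$, not $U_{d^{-1}}$, where $d=b+\beta r(b)$ (the square root exists because $d$ is positive); indeed $U_{d^{-1/2}}(c+d)=U_{d^{-1/2}}(c)+e$ with $\bigl\|U_{d^{-1/2}}(c)\bigr\|\leq \|d^{-1/2}\|^2\,\|c\| = \|d^{-1}\|\,\|c\| < \beta^{-1}\beta =1$, whereas $U_{d^{-1}}(c+d)=U_{d^{-1}}(c)+d^{-1}$ is a perturbation of $d^{-1}$, not of the unit, and the needed norm bound then fails in general --- the paper's proof uses exactly the square-root version.
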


\begin{proof} Let us write $a +\beta r(b) = a-b + b+ \beta r(b).$ Considering the JB$^*$-subtriple $E_b$ generated by $b,$ we can easily see that $m_q ( b+ \beta r(b)) = \beta + m_q (b).$ Therefore, by \eqref{eq mq Lipschitzian}, $$m_q (a+ \beta r(b)) \geq m_q ( b+ \beta r(b)) -\|a-b\| = \beta + m_q (b) -\|a-b\| >\beta -\|b-a\|>0,$$ which proves the first statement.\smallskip

Now, set $c = P_2 (r(b)) (a-b)$. Clearly, $\|c\|\leq \|a-b\|<\beta$. We write $$ P_2 (r(b)) (a) +  \beta r(b) = c + P_2 (r(b)) (b) +  \beta r(b) = c + b + \beta r(b). $$ Since $b$ is invertible and positive in the JB$^*$-algebra $E_2 (r(b))$, we deduce that $d = b+\beta r(b)$ is a positive invertible element in $E_2 (r(b))$, with inverse $d^{-1}\in E_2 (r(b))$. It is easy to see that $\|d^{-1}\|^{-1} = \|(b+\beta r(b))^{-1} \|^{-1} \geq \beta +m_q (b) > \beta, $ and hence $$\left\|U_{d^{-\frac12}} (a-b) \right\| \leq \|d^{-\frac12}\|^2 \ \|a-b\| < \frac{1}{\beta} \ \beta = 1,$$ which implies that $r(b) + U_{d^{-\frac12}} (a-b)$ is invertible in the JB$^*$-algebra $E_2 (r(b))$. Finally, the identity $$ P_2 (r(b)) (a) +  \beta r(b) =P_2 (r(b)) (a-b) + P_2 (r(b)) (b) +\beta r(b) $$ $$= U_{d^{\frac12}} \left( U_{d^{-\frac12}} (a-b) + U_{d^{-\frac12}} ( b+ \beta r(b))\right) = U_{d^{\frac12}} \left( U_{d^{-\frac12}} (a-b) +   r(b)\right), $$ gives the final statement and concludes the proof.
\end{proof}

We can now extend Proposition \ref{p alpha_q in JBW} to the setting of JB$^*$-triples.

\begin{theorem}\label{t exact distance to the extrem from a non BP qinv} Let $E$ be a JB$^*$-triple satisfying $\mathfrak{E} (E_1)\neq  \emptyset.$ Then the formula $$\hbox{dist} (a, \mathfrak{E} (E_1)) = 1+ \alpha_q (a) ,$$ holds for every $a$ in $E_1\backslash E_q^{-1}$.
\end{theorem}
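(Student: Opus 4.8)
The lower bound $\hbox{dist}(a,\mathfrak{E}(E_1))\geq 1+\alpha_q(a)$ is already furnished by Theorem \ref{t ineq distance to the extrem from a non BP qinv}, so the whole task reduces to establishing the reverse inequality $\hbox{dist}(a,\mathfrak{E}(E_1))\leq 1+\alpha_q(a)$. The plan is to approximate $a$ by a BP-quasi-invertible element, perturb the latter by a multiple of a complete tripotent so as to land again in $E_q^{-1}$ with a strictly positive $m_q$, and then read off the desired extreme point as the range tripotent of this perturbation.

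Since $\alpha_q(a)\leq \|a\|\leq 1$, I would first dispose of the boundary case $\alpha_q(a)=1$: here $\|a\|=1$, and Theorem \ref{t ineq distance to the extrem from a non BP qinv} already gives $\hbox{dist}(a,\mathfrak{E}(E_1))\leq 1+\|a\|=2=1+\alpha_q(a)$, so equality holds. Assume now $\alpha_q(a)<1$ and fix $\varepsilon>0$ small enough that $\beta:=\alpha_q(a)+\varepsilon<1$. By definition of $\alpha_q$ there is $b\in E_q^{-1}$ with $\|a-b\|<\beta$; write $v:=r(b)$, which is a complete tripotent and hence $v\in\mathfrak{E}(E_1)$. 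Applying Proposition \ref{p toward de Lamba condition for JB} to $a$, $b$ and $\beta$, the element $c:=a+\beta v$ lies in $E_q^{-1}$ and satisfies $m_q(c)\geq \beta-\|a-b\|>0$. Consequently its range tripotent $e:=r(c)$ is a complete tripotent, so $e\in\mathfrak{E}(E_1)$.

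It then remains to estimate $\|a-e\|$. Since $c$ is BP-quasi-invertible, Proposition \ref{p distance to the extremals from a BP qinv element} gives $\|c-e\|=\hbox{dist}(c,\mathfrak{E}(E_1))=\max\{1-m_q(c),\|c\|-1\}$. Here $1-m_q(c)<1$ because $m_q(c)>0$, and $\|c\|-1\leq\|a\|+\beta-1\leq\beta<1$; hence $\|c-e\|<1$. Writing $a-e=(c-e)-\beta v$ and using $\|v\|=1$, I obtain $\hbox{dist}(a,\mathfrak{E}(E_1))\leq\|a-e\|\leq\|c-e\|+\beta<1+\beta=1+\alpha_q(a)+\varepsilon$. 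Letting $\varepsilon\to 0$ yields $\hbox{dist}(a,\mathfrak{E}(E_1))\leq 1+\alpha_q(a)$, and combining with the lower bound completes the proof.

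The conceptual heart of the argument has already been carried out in Proposition \ref{p toward de Lamba condition for JB}: the fact that adding the multiple $\beta r(b)$ to $a$ pushes it back into $E_q^{-1}$ with a controlled lower bound on $m_q$. Given that, the only genuine subtlety here is the constraint $\beta<1$, which is exactly what guarantees $\|c-e\|<1$ and hence the sharp bound $1+\beta$; this is precisely why the case $\alpha_q(a)=1$ must be peeled off beforehand and handled through the elementary estimate $\hbox{dist}(a,\mathfrak{E}(E_1))\leq 1+\|a\|$.
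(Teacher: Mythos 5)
Your proof is correct and follows essentially the same route as the paper's: both peel off the boundary case $\alpha_q(a)=1$, approximate $a$ by some $b\in E_q^{-1}$, perturb $a$ by a multiple of $r(b)$ via Proposition \ref{p toward de Lamba condition for JB}, take the range tripotent of the perturbed element as the nearby extreme point, and conclude by the triangle inequality and a limiting argument. The only (harmless) difference is bookkeeping: the paper runs the argument with two parameters $1>\delta>\beta>\alpha_q(a)$ and the bound $\max\{1+\beta,2\delta\}$, whereas you use a single parameter $\beta$ together with the strict positivity $m_q(c)\geq \beta-\|a-b\|>0$ to get $\|c-r(c)\|<1$ directly.
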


\begin{proof} Fix $a$ in $E_1\backslash E_q^{-1}$. Theorem \ref{t ineq distance to the extrem from a non BP qinv} proves that $2\geq \hbox{dist} (a, \mathfrak{E} (E_1)) \geq  1+ \alpha_q (a).$ In particular $0\leq \alpha_q (a) \leq 1.$ When $\alpha_q (a)=1$, we have $ 2\geq \hbox{dist} (a, \mathfrak{E} (E_1)) \geq 1+ \alpha_q (a) =2$, we may therefore assume that $\alpha_q(a)<1$.\smallskip

We shall prove now that for each pair $(\delta,\beta)$ with  $1> \delta>\beta > \alpha_q (a)$ there exists $e\in \mathfrak{E} (E_1)$ with $\|a-e\| < \max\{1+ \beta, 2 \delta \}.$ Indeed, by definition there exists $b\in E_q^{-1}$ such that $\|a-b\|< \beta < \delta.$ By Proposition \ref{p toward de Lamba condition for JB}, the element $z=a + \delta r(b) \in E_q^{-1}$ and $m_q(z) = m_q (a+ \delta r(b)) \geq \delta -\|b-a\|> \delta-\beta.$\smallskip

Clearly, $\|a-z\| = \|\delta r(b)\| = \delta.$ Since $z\in E_q^{-1},$ its range tripotent $e=r(z)\in \mathfrak{E} (E_1).$ It is known that $$\|z-r(z)\| = \max\{1-m_q(z), \|z\|-1 \} < \max\{1-\delta+ \beta, \|a\|+\delta-1 \}$$ Therefore $$\|a -r(z)\| \leq \|a-z\| + \|z- r(z)\| $$ $$< \delta + \max\{1-\delta+ \beta, \|a\|+\delta-1 \} \leq \max\{1+ \beta, 2 \delta \}.$$

This proves that for each pair $(\delta,\beta)$ with  $1> \delta>\beta > \alpha_q (a)$ we have $$\hbox{dist} (a, \mathfrak{E} (E_1)) \leq \max\{1+ \beta, 2 \delta \},$$ letting $\beta,\delta\to \alpha_q (a)$ we get $$\hbox{dist} (a, \mathfrak{E} (E_1)) \leq \max\{1+ \alpha_q (a), 2 \alpha_q (a) \}= 1+ \alpha_q (a),$$ which concludes the proof.
\end{proof}

The set of extreme points of the closed unit ball of a unital C$^*$-algebra is always non-empty. Since every C$^*$-algebra is a JB$^*$-triple, \cite[Theorem 2.3]{BrowPed97} derives as a direct consequence of our Theorem \ref{t exact distance to the extrem from a non BP qinv}. Actually the proof above provides a simpler argument to obtain the result in \cite{BrowPed97}. Let us observe that the introduction of JB$^*$-triple techniques makes the proofs easier because the set of extreme points is not directly linked to the order structure of a C$^*$-algebra.\smallskip

\begin{remark}\label{remark open question} In order to determine the $\lambda$ function on $E_1\backslash E_q^{-1}$, it would be very interesting to know if the distance formula established in Theorem \ref{t exact distance to the extrem from a non BP qinv} can be improved to show that, under the same hypothesis, the equality \begin{equation}\label{eq conjectured distance} \hbox{dist} (a, \mathfrak{E} (E_1)) = \max\{1+ \alpha_q (a) , \|a\|-1\},
\end{equation} holds for every $a$ in $E\backslash E_q^{-1}$.\smallskip

We recall that a JB$^*$-triple $E$ is said to be \emph{commutative} or \emph{abelian} if the identity $$ \J {\J xyz}ab = \J xy{\J zab} = \J x{\J yza}b$$ holds for all
$x,y,z,a,b \in E$, equivalently, $L(a,b) L(c,d) = L(c,d) L(a,b)$, for every $a,b,c,d\in E$. Suppose $E$ is a commutative JB$^*$-triple with $\mathfrak{E} (E_1)\neq  \emptyset.$ It is known (cf. \cite[Theorems 2 and 4]{FriRus82} or \cite[Lemma 6.2]{HoPeRu2013}) that for each $e\in \mathfrak{E} (E_1),$ the JB$^*$-triple $E$ is a commutative C$^*$-algebra with unit $e$, product and involution given by $a\circ_e b := \{ a,e,b\}$ and $a^{\sharp_e} := \{ e,a,e\}$ ($a,b\in E$), respectively, and the same norm. We have already observed that when a C$^*$-algebra $A$ is regarded as a JB$^*$-triple with the triple product given in \eqref{eq triple product C*}, the BP-quasi-invertible elements in $A$, as JB$^*$-triple, are exactly the quasi-invertible elements of the C$^*$-algebra $A$ introduced and studied by Brown and Pedersen in \cite{BrowPed95,BrowPed97}. Since the Banach space underlying $E$ has not been changed, we can deduce from \cite[Theorem 2.3]{BrowPed97} that $$\hbox{dist} (a, \mathfrak{E} (E_1)) = \max\{1+ \alpha_q (a) , \|a\|-1\},$$ for every $a$ in $E\backslash E_q^{-1}$, that is, \eqref{eq conjectured distance} holds for every commutative JB$^*$-triple $E$ with $\mathfrak{E} (E_1)\neq  \emptyset.$ It can be also shown that in this case, $$\lambda (a)\leq \frac12 (1-\alpha_q (a)),$$ for every $a\in E_1\backslash E_q^{-1}.$ Since commutative JB$^*$-triples are also example of function spaces (cf. \cite[\S 1]{Ka83} and \cite{FriRus83}), the last result complements the study developed in \cite[Theorem 1.9]{AronLohm}.
\end{remark}

\bigskip

\end{document}